 \theoremstyle{plain}
\newtheorem{theorem}{Theorem}
\newtheorem{corollary}{Corollary}
\newtheorem{lemma}{Lemma}
\newtheorem{proposition}{Proposition}
\theoremstyle{example}
\newtheorem{example}{Example}
\theoremstyle{definition}
\newtheorem{definition}{Definition}
\theoremstyle{remark}
\numberwithin{equation}{section}
\newcommand{\bT}{\begin{theorem}}
\newcommand{\eT}{\end{theorem}}
\newcommand{\bProp}{\begin{proposition}}
\newcommand{\eProp}{\end{proposition}}
\newcommand{\bE}{\begin{example}}
\newcommand{\eE}{\end{example}}
\newcommand{\bL}{\begin{lemma}}
\newcommand{\eL}{\end{lemma}}
\newcommand{\bP}{\begin{proof}}
\newcommand{\eP}{\end{proof}}
\newcommand{\bC}{\begin{corollary}}
\newcommand{\eC}{\end{corollary}}
\newcommand{\bD}{\begin{definition}}
\newcommand{\eD}{\end{definition}}
\newcommand{\be}{\begin{enumerate}}
\newcommand{\ee}{\end{enumerate}}
\newcommand{\beqa}{\begin{eqnarray*}}
\newcommand{\eeqa}{\end{eqnarray*}}
\newcommand{\beqaa}{\begin{eqnarray}}
\newcommand{\eeqaa}{\end{eqnarray}}
\newcommand{\ba}{\begin{array}}
\newcommand{\ea}{\end{array}}
\newdimen\plusheight
\def\+{\;\lower\plusheight\hbox{$+$}\;}
\newdimen\minusheight
\def\-{\;\lower\minusheight\hbox{$-$}\;}
\newdimen\cdotsheight
\def\cds{\lower\cdotsheight\hbox{$\cdots$}}
\begin{document}

\title[Some identities deriving from a new Bailey-type transformation]
       { Some identities between basic hypergeometric series deriving from a new Bailey-type transformation}
\author{James Mc Laughlin}
\address{Mathematics Department\\
 Anderson Hall\\
West Chester University, West Chester, PA 19383}
\email{jmclaughl@wcupa.edu}

\author{Peter Zimmer}
\address{Mathematics Department\\
 Anderson Hall\\
West Chester University, West Chester, PA 19383}
\email{pzimmer@wcupa.edu}

 \keywords{ Q-Series, basic hypergeometric series, Bailey chains, Bailey transform, WP-Bailey pairs }
 \subjclass[2000]{Primary: 33D15. Secondary:11B65, 05A19.}

\date{\today}

\begin{abstract}
We prove a new Bailey-type transformation relating WP-Bailey pairs.
We then use this transformation to derive a number of new 3- and
4-term transformation formulae between basic hypergeometric series.
\end{abstract}

\maketitle

\section{Introduction}

Bailey's  transform can be stated as follows:
\begin{lemma}\label{l1}
Subject to suitable convergence conditions, if
\begin{equation*}
\beta_n = \sum_{r=0}^{n} \alpha_{r}U_{n-r}V_{n+r} \hspace{10pt}
\text{ and } \hspace{10pt} \gamma_n = \sum_{r=n}^{\infty}
\delta_{r}U_{r-n}V_{r+n},
\end{equation*}
then
\begin{equation*}
\sum_{n=0}^{\infty} \alpha_n \gamma_n= \sum_{n=0}^{\infty} \beta_n
\delta_n.
\end{equation*}
\end{lemma}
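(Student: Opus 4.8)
The plan is to prove the identity by starting from the right-hand side, substituting the defining relation for $\beta_n$, and then interchanging the order of the resulting double summation. The key observation is that after the interchange the inner sum reassembles precisely into the definition of $\gamma_n$, so no clever algebra is required beyond a careful bookkeeping of indices.

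Concretely, I would first write
\[
\sum_{n=0}^{\infty} \beta_n \delta_n = \sum_{n=0}^{\infty} \delta_n \sum_{r=0}^{n} \alpha_r U_{n-r} V_{n+r},
\]
so that the right-hand side becomes a double sum over the index region $0 \le r \le n < \infty$. I would then swap the order of summation: fixing $r$ and letting $n$ range upward, the same region is described by $0 \le r < \infty$ together with $n \ge r$, giving
\[
\sum_{r=0}^{\infty} \alpha_r \sum_{n=r}^{\infty} \delta_n U_{n-r} V_{n+r}.
\]
The final step is to recognize the inner sum as $\gamma_r$: renaming the summation variable $n$ to match the hypothesis shows that $\sum_{n=r}^{\infty} \delta_n U_{n-r} V_{n+r}$ is exactly the stated expression for $\gamma_r$. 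Substituting this and relabelling $r$ as $n$ yields $\sum_{n=0}^{\infty} \alpha_n \gamma_n$, which is the left-hand side, completing the argument.

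The only nontrivial point --- and the reason for the qualifier ``subject to suitable convergence conditions'' --- is the justification of the interchange of the two summations. I would invoke absolute convergence of the double series $\sum_{n,r} |\alpha_r \delta_n U_{n-r} V_{n+r}|$, under which Fubini's theorem (equivalently Tonelli's theorem applied to the series of absolute values) guarantees that reversing the order of summation leaves the value unchanged. This is where the hidden hypotheses genuinely do the work; everything else is a purely formal manipulation of the summation indices, and I expect that reindexing step, rather than any estimate, to be the only place where a reader could slip.
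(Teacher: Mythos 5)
Your proposal is correct and is essentially the paper's own argument: the paper proves the lemma precisely by switching the order of summation (citing Andrews--Askey--Roy for details), which is the interchange you carry out explicitly, with the convergence caveat playing the same role in both. Nothing further is needed.
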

The proof follows by switching the order of summation (see
\cite{AAR99}, pages 583--584, for example). Bailey set
{\allowdisplaybreaks\begin{align*} U_n=\frac{1}{(q;q)_n},
\hspace{25pt} V_n=\frac{1}{(x;q)_n}, \hspace{25pt} \delta_n =
(y,z;q)_n \left ( \frac{x}{yz}\right )^n,
\end{align*}
}and  used the $q$-Gauss sum,
\begin{equation}\label{qgauss} _2\phi_1 (a,b;c;q,c/ab)=
\frac{(c/a,c/b;q)_{\infty}}{(c,c/ab;q)_{\infty}},
\end{equation}to get that
\begin{equation}\label{Baileyeq}
\sum_{n=0}^{\infty} (y,z;q)_{n}\left ( \frac{x}{yz}\right )^{n}
\beta_n = \frac{(x/y,x /z;q)_{\infty}}{ (x , x/yz;q)_{\infty}}
\sum_{n=0}^{\infty} \frac{(y,z;q)_{n}}{(x/y,x/z;q)_n}\left (
\frac{x}{yz}\right )^{n} \alpha_n,
\end{equation}
where $\alpha_0=1$ and
\begin{equation}\label{BP1}
\beta_n = \sum_{r=0}^{n} \frac{\alpha_r}{(q;q)_{n-r}(x;q)_{n+r}}.
\end{equation}

Here we are employing the usual notations. Let $a$ and $q$ be
complex numbers, with $|q|<1$ unless otherwise stated. Then
\begin{align*}
&(a)_0 =(a;q)_0 :=1, \hspace{20pt} (a)_n=(a;q)_n
:=\prod_{j=0}^{n-1}(1-a q^j), \text{ for } n\in \mathbb{N},\\
&(a_1;q)_n(a_2;q)_n \dots (a_k;q)_n = (a_1,a_2,\dots, a_k;q)_n,\\
&(a;q)_{\infty}:=\prod_{j=0}^{\infty}(1-a q^j), \hspace{20pt}\\
&(a_1;q)_{\infty}(a_2;q)_{\infty} \dots (a_k;q)_{\infty} =
(a_1,a_2,\dots, a_k;q)_{\infty}.
\end{align*}

An $_{r} \phi _{s}$ basic hypergeometric series is defined by
{\allowdisplaybreaks
\begin{multline*} _{r} \phi _{s} \left [
\begin{matrix}
a_{1}, a_{2}, \dots, a_{r}\\
b_{1}, \dots, b_{s}
\end{matrix}
; q,x \right ] = \\
\sum_{n=0}^{\infty} \frac{(a_{1};q)_{n}(a_{2};q)_{n}\dots
(a_{r};q)_{n}} {(q;q)_{n}(b_{1};q)_{n}\dots (b_{s};q)_{n}} \left(
(-1)^{n} q^{n(n-1)/2} \right )^{s+1-r}x^{n}.
\end{multline*}
}

In modern notation, the pair of sequences $(\alpha_n, \beta_n)$
above are termed a \emph{Bailey pair relative to $x/q$}. Slater, in
\cite{S51} and \cite{S52}, subsequently used this transformation of
Bailey to derive 130 identities of the Rogers-Ramanujan type.

The first major variations in Bailey's construct at \eqref{Baileyeq}
appear to be due to Bressoud \cite{B81a}. Another variation was
given by Singh in \cite{S94}. All of these variations were put in a
more formal setting by Andrews in \cite{A01}, where he introduced a
generalization of the standard Bailey pair as defined at \eqref{BP1}
(see also equation (9.3) in \cite{B47}).

\textbf{Definition.} (Andrews \cite{A01}) Two sequences
$(\alpha_{n}(a,k), \beta_{n}(a,k))$ form a \emph{WP-Bailey pair}
provided
%{\allowdisplaybreaks
\begin{equation}\label{WPpair}
\beta_{n}(a,k) = \sum_{j=0}^{n}
\frac{(k/a)_{n-j}(k)_{n+j}}{(q)_{n-j}(aq)_{n+j}}\alpha_{j}(a,k).
%\\
%&= \frac{(k/a,k;q)_n}{(aq,q;q)_n}\sum_{j=0}^{n}
%\frac{(q^{-n})_{j}(kq^n)_{j}}{(aq^{1-n}/k)_{j}(aq^{n+1})_{j}}
%\left(\frac{qa}{k}\right)^j\alpha_{j}(a,k). \notag
\end{equation}
%}
Note that if $k=0$, then the definition reverts to that of a
standard Bailey pair.

In the same paper Andrews showed  that there were two distinct ways
to construct new WP-Bailey pairs from a given pair. If
$(\alpha_{n}(a,k),\,\beta_{n}(a,k))$ satisfy \eqref{WPpair}, then so
do $(\alpha_{n}'(a,k),\,\beta_{n}'(a,k))$ and
$(\tilde{\alpha}_{n}(a,k),\,\tilde{\beta}_{n}(a,k))$, where
{\allowdisplaybreaks
\begin{align}\label{wpn1}
\alpha_{n}'(a,k)&=\frac{(\rho_1, \rho_2)_n}{(aq/\rho_1,
aq/\rho_2)_n}\left(\frac{k}{c}\right)^n\alpha_{n}(a,c),\\
\beta_{n}'(a,k)&=\frac{(k\rho_1/a,k\rho_2/a)_n}{(aq/\rho_1,
aq/\rho_2)_n} \notag\\
&\phantom{as}\times \sum_{j=0}^{n} \frac{(1-c
q^{2j})(\rho_1,\rho_2)_j(k/c)_{n-j}(k)_{n+j}}{(1-c)(k\rho_1/a,k\rho_2/a)_n(q)_{n-j}(qc)_{n+j}}
\left(\frac{k}{c}\right)^j\beta_{j}(a,c), \notag
\end{align}
}with  $c=k\rho_1 \rho_2/aq$ for the pair above, and
\begin{align}\label{wpn2}
\tilde{\alpha}_{n}(a,k)&= \frac{(qa^2/k)_{2n}}{(k)_{2n}}\left
(\frac{k^2}{q a^2} \right)^n\alpha_{n} \left(a, \frac{q a^2}{k}
\right), \\
\tilde{\beta}_{n}(a,k)&=\sum_{j=0}^{n}
\frac{(k^2/qa^2)_{n-j}}{(q)_{n-j}}\left (\frac{k^2}{q a^2}
\right)^j\beta_{j} \left(a, \frac{q a^2}{k} \right). \notag
\end{align}

Andrews two constructions can be shown to imply the following
Bailey-type transformations for WP-Bailey pairs, assuming suitable
convergence conditions.
\begin{theorem}
If $(\alpha_n(a,k),\beta_n(a,k))$ satisfy
\begin{equation*} \beta_{n}(a,k) = \sum_{j=0}^{n}
\frac{(k/a)_{n-j}(k)_{n+j}}{(q)_{n-j}(aq)_{n+j}}\alpha_{j}(a,k),
\end{equation*}
then
\begin{multline}\label{wpbteq1b}
\sum_{n=0}^{\infty} \frac{(1-kq^{2n})(\rho_1,\rho_2;q)_n }{(1-k)(k
q/\rho_1,k q/\rho_2;q)_n}\,\left(\frac{a q}{\rho_1 \rho_2}\right)^n
\beta_n(a,k)=\\\frac{(k q,k q/\rho_1\rho_2,a q/\rho_1,a
q/\rho_2;q)_{\infty}}{(k q/\rho_1,k q/\rho_2,a q/\rho_1 \rho_2,a
q;q)_{\infty}}
%\phantom{asdadasdasdabvvbvmvmbnvbnvdasdasdas}\\\times
\sum_{n=0}^{\infty} \frac{(\rho_1,\rho_2;q)_{n}} {(a q/\rho_1,a
q/\rho_2;q)_{n}}\left ( \frac{a q}{\rho_1 \rho_2}\right)^n
\alpha_n(a,k),
\end{multline}
and
\begin{multline}\label{wpbteq2b}
\sum_{n=0}^{\infty}  \left(\frac{q a^2}{k^2}\right)^n \beta_n(a,k)\\
= \frac{(qa/k,qa^2/k;q)_{\infty}} {(q a,qa^2/k^2;q)_{\infty}}
\sum_{n=0}^{\infty} \frac{(k;q)_{2n}} {(q a^2/k;q)_{2n}}\left (
\frac{q a^2}{k^2}\right)^n \alpha_n(a,k).
\end{multline}
\end{theorem}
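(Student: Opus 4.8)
The plan is to mimic Bailey's own derivation of \eqref{Baileyeq}: substitute the defining relation \eqref{WPpair} into the left-hand side of each identity, interchange the order of summation, and reduce the resulting inner sum to a classical $q$-summation formula. For both \eqref{wpbteq1b} and \eqref{wpbteq2b} I would begin by writing the left-hand side as $\sum_{n\ge 0} W_n\,\beta_n(a,k)$ and replacing $\beta_n(a,k)$ by $\sum_{j=0}^{n}\frac{(k/a)_{n-j}(k)_{n+j}}{(q)_{n-j}(aq)_{n+j}}\alpha_j(a,k)$. After swapping the two sums (legitimate under the stated convergence hypotheses) the coefficient of $\alpha_j(a,k)$ becomes a single sum over $n\ge j$, which I would reindex by $n=j+m$. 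Using $(x)_{j+m}=(x)_j(xq^{j})_m$ and $(x)_{2j+m}=(x)_{2j}(xq^{2j})_m$, the $j$-dependent factors pull outside and the remaining $m$-sum is a basic hypergeometric series whose parameters depend on $j$ only through shifts by powers of $q$.

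For \eqref{wpbteq1b} the telltale factor $(1-kq^{2n})/(1-k)$ signals a very-well-poised series. After the reindexing the inner sum is, up to an explicit factor $(1-kq^{2j})/(1-k)$, a very-well-poised ${}_6\phi_5$ with base parameter $A=kq^{2j}$, numerator parameters $b=\rho_1 q^{j}$, $c=\rho_2 q^{j}$, $d=k/a$, and argument $Aq/(bcd)=aq/(\rho_1\rho_2)$; one checks directly that $Aq/b=kq^{j+1}/\rho_1$, $Aq/c=kq^{j+1}/\rho_2$, and $Aq/d=aq^{2j+1}$ are exactly the denominator parameters produced by the reindexing, and that $(A)_m=(kq^{2j})_m$ is supplied by $(k)_{n+j}$. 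I would then apply Rogers' ${}_6\phi_5$ summation
\[
{}_6\phi_5\!\left[\begin{matrix} A,\,qA^{1/2},\,-qA^{1/2},\,b,\,c,\,d\\ A^{1/2},\,-A^{1/2},\,Aq/b,\,Aq/c,\,Aq/d\end{matrix};q,\frac{Aq}{bcd}\right]=\frac{(Aq,Aq/bc,Aq/bd,Aq/cd;q)_\infty}{(Aq/b,Aq/c,Aq/d,Aq/bcd;q)_\infty}.
\]
For \eqref{wpbteq2b} the analogous computation is simpler: with $W_n=(qa^2/k^2)^n$ there is no well-poised factor, and after reindexing the inner sum is a plain ${}_2\phi_1$ with numerator parameters $k/a$ and $kq^{2j}$, denominator parameter $aq^{2j+1}$, and argument $qa^2/k^2$. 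Since this argument equals $c/(ab)$ for those parameters, it is summable by the $q$-Gauss sum \eqref{qgauss} already recorded above, again producing a ratio of infinite products in which the $q^{2j}$-shifts appear in a controlled way.

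The main obstacle, in both cases, is not the summation step but the bookkeeping afterward: I must show that the $j$-dependent infinite products coming out of the summation factor cleanly into a single $j$-independent constant — matching the displayed prefactor $\frac{(kq,kq/\rho_1\rho_2,aq/\rho_1,aq/\rho_2;q)_\infty}{(kq/\rho_1,kq/\rho_2,aq/\rho_1\rho_2,aq;q)_\infty}$ in \eqref{wpbteq1b}, respectively $\frac{(qa/k,qa^2/k;q)_\infty}{(qa,qa^2/k^2;q)_\infty}$ in \eqref{wpbteq2b} — times exactly the coefficient of $\alpha_j(a,k)$ appearing on the right-hand side. The tools for this are the elementary identities $(xq^{2j};q)_\infty=(x;q)_\infty/(x;q)_{2j}$ and $(x)_{2j}=(x)_j(xq^{j})_j$, which convert the shifted infinite products (and the stray factor $(1-kq^{2j})$) into the finite Pochhammer symbols $\frac{(\rho_1,\rho_2)_j}{(aq/\rho_1,aq/\rho_2)_j}$, respectively $\frac{(k)_{2j}}{(qa^2/k)_{2j}}$, demanded by the right-hand side; the explicit power $(aq/\rho_1\rho_2)^{j}$ (respectively $(qa^2/k^2)^{j}$) survives directly from $W_{j+m}$. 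Finally I would record the convergence conditions — notably $|aq/\rho_1\rho_2|<1$ for the ${}_6\phi_5$ and $|qa^2/k^2|<1$ for the $q$-Gauss application — under which the interchange of summation and the nonterminating summations are valid.
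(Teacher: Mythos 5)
Your proof is correct: the parameter identifications for the very-well-poised $_6\phi_5$ (with $A=kq^{2j}$, $b=\rho_1q^j$, $c=\rho_2q^j$, $d=k/a$) and for the $q$-Gauss application check out, and the $j$-dependent products do cancel as you claim --- in particular the stray factor $(1-kq^{2j})/(1-k)$ is absorbed by $(k)_{2j}/(kq)_{2j}$, leaving exactly the coefficients $\frac{(\rho_1,\rho_2)_j}{(aq/\rho_1,aq/\rho_2)_j}(aq/\rho_1\rho_2)^j$ and $\frac{(k)_{2j}}{(qa^2/k)_{2j}}(qa^2/k^2)^j$ on the right-hand sides. The paper, however, does not write out a proof of this theorem at all: it asserts that both identities follow from Andrews' iteration formulas \eqref{wpn1} and \eqref{wpn2} (essentially by letting $n\to\infty$ there), while remarking that the authors originally obtained \eqref{wpbteq1b} ``in a way that was similar to the way Bailey derived \eqref{Baileyeq}'' --- which is precisely your argument, i.e.\ Lemma \ref{l1} with $U_n=(k/a)_n/(q)_n$, $V_n=(k)_n/(aq)_n$ and the appropriate $\delta_n$, with Rogers' nonterminating $_6\phi_5$ sum (resp.\ the $q$-Gauss sum \eqref{qgauss}) playing the role that \eqref{qgauss} played for Bailey. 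Your route is also the same engine the paper uses to prove Theorem \ref{t3}, just with a different summation formula evaluating $\gamma_n$. The trade-off: deducing the theorem from \eqref{wpn1}--\eqref{wpn2} gives the terminating (finite $n$) versions for free and avoids redoing the product bookkeeping, whereas your direct computation is self-contained, does not presuppose Andrews' constructions, and makes the needed convergence conditions ($|aq/\rho_1\rho_2|<1$, $|qa^2/k^2|<1$) explicit. Either way the argument is sound; just make sure to state the nonterminating $_6\phi_5$ summation with its convergence hypothesis, since the terminating version alone does not suffice here.
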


We had initially derived the transformation at \eqref{wpbteq1b} in a
way that was similar to the way Bailey derived \eqref{Baileyeq},
before finding that it
 followed from Andrews' first construction at \eqref{wpn1}.
A result equivalent to the transformation at \eqref{wpbteq2b} was
also stated by Bressoud in \cite{B81a}.

In the present paper we prove the following transformation for
WP-Bailey pairs.
\begin{theorem}\label{t3}
Subject to suitable convergence conditions, if
\begin{equation}\label{betaneq3}
\beta_n = \sum_{r=0}^{n}
\frac{(k/a;q)_{n-r}}{(q;q)_{n-r}}\frac{(k;q)_{n+r}}{( a q;q)_{n+r}}
\alpha_{r},
\end{equation}
then {\allowdisplaybreaks
\begin{multline}\label{simsuma3}
\frac{ (q a b/k, k q/b;q)_{\infty}(q,k^2 q/a, q^2 a, q^2
a^2/k^2;q^2)_{\infty}}
 { (k q,q a/k;q)_{\infty}}\\
 \times  \sum_{n=0}^{\infty}
\frac{(q\sqrt{k}, -q\sqrt{k},k^2/ab,b,\sqrt{q a},- \sqrt{q
a};q)_n}{(\sqrt{k},-\sqrt{k}, q a b/k,k q/b, k \sqrt{q/a}, -
k\sqrt{q/a};q)_n}\left ( \frac{-q
a}{k}\right)^n \beta_n \\
= \left(\frac{q k^2}{ab}, b q,\frac{ q^2 a^2 b}{k^2}, \frac{q^2
a}{b};q^2\right)_{\infty}
%\\\times
\sum_{n=0}^{\infty}\frac{\left(\frac{k^2}{a b},b;q^2
\right)_{n}}{\left(\frac{q^2 a^2 b}{k^2}, \frac{q^2 a}{b};
q^2\right)_{n}} \left(\frac{-q a}{k}\right)^{2n} \alpha_{2n}
\phantom{asdsfasdf}\\
+ \left(\frac{k^2}{ab}, b,\frac{ q^3 a^2 b}{k^2}, \frac{q^3
a}{b};q^2\right)_{\infty}
%\\\times
\sum_{n=0}^{\infty}\frac{\left(\frac{k^2 q}{a b},b q;q^2
\right)_{n}}{\left(\frac{q^3 a^2 b}{k^2}, \frac{q^3 a}{b};
q^2\right)_{n}} \left(\frac{-q a}{k}\right)^{2n+1} \alpha_{2n+1}.
\end{multline}
}
\end{theorem}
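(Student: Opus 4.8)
The plan is to derive \eqref{simsuma3} from Bailey's transform, Lemma~\ref{l1}, in the same spirit in which \eqref{wpbteq1b} was obtained. The first step is to recognize the coefficient of $\beta_n$ on the left of \eqref{simsuma3} as a very-well-poised weight. Using the telescoping identity $(q\sqrt{k},-q\sqrt{k};q)_n/(\sqrt{k},-\sqrt{k};q)_n=(1-kq^{2n})/(1-k)$ together with $(\sqrt{qa},-\sqrt{qa};q)_n=(qa;q^2)_n$ and $(k\sqrt{q/a},-k\sqrt{q/a};q)_n=(k^2q/a;q^2)_n$, this coefficient becomes
\[
\delta_n=\frac{1-kq^{2n}}{1-k}\,\frac{(k^2/ab,b;q)_n}{(qab/k,kq/b;q)_n}\,\frac{(qa;q^2)_n}{(k^2q/a;q^2)_n}\,\Bigl(\frac{-qa}{k}\Bigr)^n,
\]
a very-well-poised coefficient with base parameter $k$. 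In particular all square roots occur in $\pm$ pairs, so $\delta_n$ is rational in $a,k$, matching the square-root-free right-hand side.

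Reading \eqref{betaneq3} as $\beta_n=\sum_{r=0}^n \alpha_r\,U_{n-r}V_{n+r}$ with $U_m=(k/a;q)_m/(q;q)_m$ and $V_m=(k;q)_m/(aq;q)_m$, Lemma~\ref{l1} turns the left-hand side of \eqref{simsuma3} (apart from the stated infinite-product prefactor, call it $P$) into $\sum_{n}\delta_n\beta_n=\sum_{r}\gamma_r\alpha_r$, where $\gamma_r=\sum_{n\ge r}\delta_n\,U_{n-r}V_{n+r}$. Writing $n=r+j$ and extracting the factors independent of the summation index $j$, I would display $\gamma_r$ as an explicit $r$-dependent prefactor times an inner series $S_r$; here $S_r$ is a very-well-poised ${}_8\phi_7$-type series in base $q$ whose base parameter is $kq^{2r}$, whose argument is $-qa/k$, and which carries the special parameter pair $\pm\sqrt{qa}\,q^{r}$ inherited from $\delta$.

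The crux of the proof is the evaluation of $S_r$. Because of the pair $\pm\sqrt{qa}\,q^{r}$, whose product is a base-$q^2$ quantity, together with the non-standard argument $-qa/k$, the series $S_r$ is not summed by the ordinary Rogers ${}_6\phi_5$ sum; instead it should fall to a quadratic summation formula of the type that reduces a very-well-poised base-$q$ series carrying a $\pm\sqrt{\,\cdot\,}$ parameter pair to a ratio of base-$q^2$ infinite products. Equivalently, one may split the $j$-summation according to the parity of $j$ and sum the two resulting base-$q^2$ series. This is the step where the $q^2$ bases on the right of \eqref{simsuma3} first appear, and I expect it to be the main obstacle, both in selecting the correct summation identity and in simplifying its output.

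Finally I would reassemble $P\gamma_r$ and match it, term by term, to the coefficient of $\alpha_r$ on the right of \eqref{simsuma3}. Here the parity split emerges naturally: the base-$q$ shifted factorials $(k^2/ab,b;q)_r$ and $(qab/k,kq/b;q)_r$ in the prefactor factor into base-$q^2$ pieces whose shape depends on the parity of $r$ (for example $(k^2/ab;q)_{2m}=(k^2/ab;q^2)_m(qk^2/ab;q^2)_m$, with the odd case picking up one shifted factor), while the identity $(qa;q^2)_r=(aq;q^2)_r$ cancels half of the expansion $1/(aq;q)_{2r}=1/[(aq;q^2)_r(aq^2;q^2)_r]$. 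Collecting the even indices $r=2m$ and the odd indices $r=2m+1$ separately, and checking that the leftover infinite products reproduce the two $q^2$-prefactors $\left(qk^2/ab,bq,q^2a^2b/k^2,q^2a/b;q^2\right)_\infty$ and $\left(k^2/ab,b,q^3a^2b/k^2,q^3a/b;q^2\right)_\infty$, should yield exactly the two sums in \eqref{simsuma3}. The genuinely parity-dependent form of $\gamma_r$ is forced by the quadratic reduction, which is precisely why a single series on the left of \eqref{simsuma3} is transformed into two series on the right.
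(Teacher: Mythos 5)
Your outline is essentially the paper's own proof: the authors apply Lemma \ref{l1} with precisely your $U_m$, $V_m$, and $\delta_n$ (written in variables $\lambda,a,b$ that are only substituted to $k$, $k^2/ab$, $b$ at the very end), shift the summation index in $\gamma_n$, evaluate the resulting inner very-well-poised series in closed form, and obtain the parity split exactly as you predict, from the way the base-$q^2$ infinite products $(aq^{1+n},bq^{1+n},\dots;q^2)_\infty$ in that evaluation peel apart according to $n\bmod 2$. The one ingredient you leave unidentified --- the ``quadratic summation formula'' for the inner series, which you rightly flag as the crux --- is the nonterminating $q$-analogue of Watson's $_3F_2$ sum, \eqref{waq3f2} (see \cite{GR04}, (II.16)), applied with $\lambda\to\lambda q^{2n}$, $a\to aq^{n}$, $b\to bq^{n}$; the key point making this work uniformly in $n$ is that the argument $-q\lambda/ab$ of that $_8\phi_7$ is invariant under this shift, so no parity split of the inner $j$-sum is needed.
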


We use this transformation to derive some new 3- and 4-term
transformations between basic hypergeometric series.

\section{A Transformation deriving from a $q$-analog of Watson's  $_3F_2$ sum}

We recall the following $q$-analogue of Watson's $_3 F_2$ sum (see
\cite[(II.16), page 355]{GR04}),
\begin{multline}
\label{waq3f2} _{8} \phi _{7} \left [
\begin{matrix}
\lambda, \,q \sqrt{\lambda},\, -q \sqrt{\lambda}, \,a,\, b,\,
\lambda \sqrt{q/a b},\,-\lambda \sqrt{q/a b}, \,
a b/\lambda\\
\sqrt{\lambda},\,-\sqrt{\lambda},\, \lambda q/a,\,\lambda
q/b,\,\lambda^2 q/a b,\,\sqrt{q a b},\, -\sqrt{q a b}
\end{matrix}
; q,-\frac{q \lambda}{a b}  \right ]\\ = \frac{(\lambda q,\lambda
q/a b;q)_{\infty}}{(\lambda q/a,\lambda q/b;q)_{\infty}}\frac{(a q,b
q,q^2 \lambda^2/a^2 b, q^2 \lambda^2/a b^2;q^2)_{\infty}}{( q, a b
q, q^2\lambda^2/a b,q^2\lambda^2/a^2b^2;q^2)_{\infty}}.
\end{multline}

We now prove the main theorem.

\begin{proof}[Proof of Theorem \ref{t3}.]
In Lemma \ref{l1},  set {\allowdisplaybreaks
\begin{align*}
U_r&=\frac{(ab/\lambda;q)_r}{(q;q)_r},\\
V_r&=\frac{(\lambda;q)_r}{(\lambda^2q/ab;q)_r},\\
\delta_r&=\frac{(q\sqrt{\lambda}, -q\sqrt{\lambda},a,b,\lambda
\sqrt{q/ab},-\lambda
\sqrt{q/ab};q)_r}{(\sqrt{\lambda},-\sqrt{\lambda}, \lambda
q/a,\lambda q/b, \sqrt{q a b}, -\sqrt{q a b};q)_r}\left ( \frac{-q
\lambda}{a b}\right)^r.
\end{align*}
} Then {\allowdisplaybreaks
 \begin{align*}
\gamma_n &= \sum_{r=n}^\infty \delta_r U_{r-n} V_{r+n}\\
&= \sum_{m=0}^\infty \delta_{m+n} U_{m} V_{m+2n}\\
&= \sum_{m=0}^\infty \frac{(q\sqrt{\lambda},
-q\sqrt{\lambda},a,b,\lambda \sqrt{q/ab},-\lambda
\sqrt{q/ab};q)_{m+n}} {(\sqrt{\lambda},-\sqrt{\lambda}, \lambda
q/a,\lambda q/b, \sqrt{q a b}, -\sqrt{q a b};q)_{m+n}}
\frac{(ab/\lambda;q)_{m}}{(q;q)_{m}}\\
&\phantom{asasdasdasdasdasdasdsdsdasdsd}\times\frac{(\lambda;q)_{m+2n}}
{(\lambda^2q/ab;q)_{m+2n}}\left(\frac{-q
\lambda}{a b}\right)^{m+n}\\
&= \frac{(q\sqrt{\lambda}, -q\sqrt{\lambda},a,b,\lambda
\sqrt{q/ab},-\lambda \sqrt{q/ab};q)_{n}}
{(\sqrt{\lambda},-\sqrt{\lambda}, \lambda q/a,\lambda q/b, \sqrt{q a
b}, -\sqrt{q a b};q)_{n}} \frac{(\lambda;q)_{2n}}
{(\lambda^2q/ab;q)_{2n}} \left(\frac{-q \lambda}{a b}\right)^{n}\\
&\times \sum_{m=0}^\infty \frac{(q^{1+n}\sqrt{\lambda}, -q^{1+n}
\sqrt{\lambda},a q^n,b q^n,\lambda \sqrt{q/ab}\,q^n,-\lambda
\sqrt{q/ab}\,q^n;q)_{m}} {(\sqrt{\lambda}\,q^n,-\sqrt{\lambda}\,q^n,
\lambda q^{1+n}/a,\lambda q^{1+n}/b, \sqrt{q a b}\,q^n, -\sqrt{q a
b}\,q^n;q)_{m}}\\
&\phantom{asasdasdasdasdasdasdsdsdasdsd}\times\frac{(\lambda
q^{2n},ab/\lambda;q)_{m}} {(\lambda^2q^{1+2
n}/ab,q;q)_{m}}\left(\frac{-q
\lambda}{a b}\right)^{m}\\
&= \frac{(q\sqrt{\lambda}, -q\sqrt{\lambda},a,b,\lambda
\sqrt{q/ab},-\lambda \sqrt{q/ab};q)_{n}}
{(\sqrt{\lambda},-\sqrt{\lambda}, \lambda q/a,\lambda q/b, \sqrt{q a
b}, -\sqrt{q a b};q)_{n}} \frac{(\lambda;q)_{2n}}
{(\lambda^2q/ab;q)_{2n}} \left(\frac{-q \lambda}{a b}\right)^{n}\\
&\times \frac{(\lambda q^{1+2n},\lambda q/a b;q)_{\infty}}{(\lambda
q^{1+n}/a,\lambda q^{1+n}/b;q)_{\infty}}\frac{(a q^{1+n},b
q^{1+n},q^{2+n} \lambda^2/a^2 b, q^{2+n} \lambda^2/a
b^2;q^2)_{\infty}}{( q, a b q^{1+2n}, q^2\lambda^{2+2n}/a
b,q^2\lambda^2/a^2b^2;q^2)_{\infty}}\\
&\phantom{as}\\ &=\frac{ (\lambda q, \lambda q/a b;q)_{\infty}}
 { (\lambda q/a,
\lambda q/b,q)_{\infty}} \frac{(a q^{1+n}, b q^{1+n},
\lambda^{2}q^{2+n}/a^2 b, \lambda^{2}q^{2+n}/a
b^2;q^2)_{\infty}}{(q,q a b, \lambda^{2}q^{2}/a b,
\lambda^{2}q^{2}/a^2b^2;q^2)_{\infty}}\\
&\phantom{asasdasdasdasdsadsdasdasdasdaasdasdsdsdasdsd}\times(a,b;q)_{n}\left(\frac{-q
\lambda}{a b}\right)^{n}\\ &=\frac{ (\lambda q, \lambda q/a
b;q)_{\infty}}
 { (\lambda q/a,
\lambda q/b,q)_{\infty}}\left(\frac{-q
\lambda}{a b}\right)^{n}\times \\
&\begin{cases}\displaystyle{ \frac{(a q, b q, \lambda^{2}q^{2}/a^2
b, \lambda^{2}q^{2}/a b^2;q^2)_{\infty}}{(q,q a b,
\lambda^{2}q^{2}/a b,
\lambda^{2}q^{2}/a^2b^2;q^2)_{\infty}}}\frac{(a,b;q^2)_{\frac{n}{2}}}{(\lambda^2
q^2/a^2 b, \lambda ^2 q^2/a b^2; q^2)_{\frac{n}{2}}}, & n
\text{ even},\\
\displaystyle{ \frac{(a, b, \lambda^{2}q^{3}/a^2 b,
\lambda^{2}q^{3}/a b^2;q^2)_{\infty}}{(q,q a b, \lambda^{2}q^{2}/a
b,
\lambda^{2}q^{2}/a^2b^2;q^2)_{\infty}}}\frac{(aq,bq;q^2)_{\frac{n-1}{2}}}{(\lambda^2
q^3/a^2 b, \lambda ^2 q^3/a b^2; q^2)_{\frac{n-1}{2}}}, & n \text{
odd}.
\end{cases}
\end{align*} } The fifth equality comes from applying
\eqref{waq3f2} to the sum from the line before, after replacing
$\lambda$ with $\lambda q^{2n}$, $a$ with $a q^n$ and $b$ with $b
q^n$ in this identity.

We next make the substitutions $\lambda \to k$, $a \to k^2/b c$
followed by $c \to a$, and again suppose the sequences
$\{\alpha_n\}$ and $\{\beta_n\}$ are related by
{\allowdisplaybreaks\begin{equation*}
\beta_n = \sum_{r=0}^{n} \alpha_{r}U_{n-r}V_{n+r}\\
= \sum_{r=0}^{n}
\frac{(k/a;q)_{n-r}}{(q;q)_{n-r}}\frac{(k;q)_{n+r}}{( a q;q)_{n+r}}
\alpha_{r}. \notag
\end{equation*}
}The result now follows.
\end{proof}

We can use this theorem to re-derive some known identities between
basic hypergeometric series, and also to derive some new identities.
For example, inserting the ``trivial" WP-Bailey pair
\begin{align*}
\alpha_{n}(a,k)&=
\begin{cases} 1&n=0,\\
0, &n>0,
\end{cases}\\
\beta_n(a,k)&=\frac{(k/a,k;q)_n}{(q,aq;q)_n}
\end{align*}
in \eqref{simsuma3} leads to a version of \eqref{waq3f2} (perhaps
not surprisingly, since \eqref{waq3f2} was used to prove Theorem
\ref{t3}). We also note in passing that applying Andrews first
construction at \eqref{wpn1} to this trivial WP-Bailey pair leads to
a variant of Jackson's sum of a terminating $\,_8 \phi_7$, while
applying his second construction at \eqref{wpn2} leads to a variant
of the $q$-Pfaff-Saalsch\"{u}tz sum.

\vspace{5pt}

Before going further, we introduce some standard space-saving
notation:
\[
_{r+1} W_{r}(a_1;a_4,\dots a_{r+1};q,z)=_{r+1}\phi_{r} \left [
\begin{matrix}
a_{1},\,q\sqrt{a_1},\,-q\sqrt{a_1},\,a_4,\,\dots, \, a_{r+1}\\
\sqrt{a_1},\,-\sqrt{a_1},\, \frac{a_1 q}{a_4},\,\dots , \frac{a_1
q}{a_{r+1}}
\end{matrix}
; q, z \right ].
\]

Inserting the ``unit" WP-Bailey pair (see \cite{AB02} for example,
where this WP-Bailey pair, and others employed below, may be found),
\begin{align*}
\alpha_{n}(a,k)&=\frac{(q \sqrt{a}, -q
\sqrt{a},a,a/k;q)_n}{(\sqrt{a},-\sqrt{a},q,kq;q)_n}\left(\frac{k}{a}\right)^n,\\
\beta_n(a,k)&=\begin{cases} 1&n=0,\\
0, &n>1,
\end{cases}
\end{align*}
in \eqref{simsuma3} and replacing $q$ with $\sqrt{q}$ leads to the
following identity:
\begin{multline*}
\frac{ \left(\displaystyle{\frac{\sqrt{q} a b}{k},\frac{q a
b}{k},\frac{k\sqrt{q}}{b}, \frac{k q}{b},-\frac{a\sqrt{q}}{k},
-\frac{a q}{k},\sqrt{q},\frac{k^2\sqrt{q}}{a}},q
a;q\right)_{\infty}}
 { (k q,k\sqrt{q};q)_{\infty}}\\
=\left(\frac{k^2\sqrt{q}}{a b},b\sqrt{q},\frac{a^2 b q}{k^2},\frac{
q a}{b};q\right)_{\infty} \, _8 W_{7}\left(a; a \sqrt{q},
\frac{a}{k},\frac{a \sqrt{q}}{k},\frac{k^2}{a b},b;q,q \right)\\
-q^{1/2}\frac{(1-a q)(1-a/k)}{(1-\sqrt{q})(1-k \sqrt{q})}
\left(\frac{k^2}{a b},b,\frac{a^2 b q^{3/2}}{k^2},\frac{ q^{3/2}
a}{b};q\right)_{\infty} \\
\times \, _8 W_{7}\left(a q; a \sqrt{q}, \frac{a
\sqrt{q}}{k},\frac{a q}{k},\frac{k^2 \sqrt{q}}{a b},b \sqrt{q};q,q
\right).
\end{multline*}
This identity is a particular case of Bailey's nonterminating
extension of Jackson's $_8 \phi_7$ sum (see \cite[page 356,
II.25]{GR04}).

We now state some transformations which we believe are new. Upon
substituting Singh's WP-Bailey pair \cite{S94} (see also
\cite{AB02}, where one of Andrews' constructions was used to derive
Singh's pair from the unit pair),
\begin{align}\label{singhpr}
\alpha_{n}(a,k)&=\frac{(q \sqrt{a}, -q
\sqrt{a},a,y,z,a^2q/kyz;q)_n}
{(\sqrt{a},-\sqrt{a},q,a q/y,a q/z,kyz/a;q)_n}\left(\frac{k}{a}\right)^n,\\
\beta_n(a,k)&=\frac{(k y/a, kz/a, k, aq/yz;q)_n}{(a q/y, a q/z, k y
z/a,q;q)_n}, \notag
\end{align}
into \eqref{simsuma3}, we get the following transformation.

\begin{corollary}\label{singhcor}
\begin{multline}\label{singhcoreq}
\frac{ (q a b/k, k q/b;q)_{\infty}(q,k^2 q/a, q^2 a, q^2
a^2/k^2;q^2)_{\infty}}
 { (k q,q a/k;q)_{\infty}}\\
 \times\, _{10}W_{9}
 \left(k;\displaystyle{\frac{k^2}{a b},b,\sqrt{q a},-\sqrt{q a},
 \frac{k y}{a}, \frac{k z}{a}, \frac{a q}{y z};\,q, \,-\frac{q a}{k}} \right)\\
=\left(\frac{k^2q}{a b},b q,\frac{a^2 b q^2}{k^2},\frac{ q^2
a}{b};q^2\right)_{\infty}\phantom{dsasdasdasdasdasdsaasdfdasdadsadaa}\\
\times \, _{12} W_{11} \left(a; \frac{k^2}{a b},b,a q,y,y q,z, z q,
\frac{a^2 q}{k y z},
\frac{a^2 q^2}{k y z} ;\,q^2,\,q^2 \right)\\
-q \frac{(1-a q^2)(1-y)(1-z)(1-a^2 q/k y z)} {(1-q)(1-a q/y)(1-a
q/z)(1-k y z/a)} \left(\frac{k^2}{a b},b,\frac{a^2 b
q^3}{k^2},\frac{ q^3
a}{b};q^2\right)_{\infty}\\
\times \, _{12} W_{11} \left(q^2 a; \frac{k^2 q}{a b},b q,a q,y q,y
q^2,z q, z q^2, \frac{a^2 q^2}{k y z}, \frac{a^2 q^3}{k y z}
;\,q^2,\,q^2 \right).
\end{multline}
\end{corollary}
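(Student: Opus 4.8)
The plan is to obtain Corollary \ref{singhcor} by a direct, essentially mechanical substitution of Singh's WP-Bailey pair \eqref{singhpr} into the transformation \eqref{simsuma3} of Theorem \ref{t3}. Since Theorem \ref{t3} has already been proved, and Singh's pair is known to satisfy the WP-Bailey relation \eqref{betaneq3} (with the appropriate identification of $\alpha_r$ and $\beta_r$), no new transform-level work is needed: the entire content of the corollary is the bookkeeping required to recognize each of the three sums in \eqref{simsuma3}, after the substitution, as a very-well-poised $_{r+1}W_r$ series in the notation introduced just above the corollary.

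The key steps, in order, are as follows. First I would substitute $\beta_n(a,k)$ from \eqref{singhpr} into the left-hand sum of \eqref{simsuma3}; the summand then becomes a product of $q$-Pochhammer factors times $(-qa/k)^n$, and I would verify that the six ratio factors already present in \eqref{simsuma3} together with the four factors $(ky/a,kz/a,k,aq/yz;q)_n$ over $(aq/y,aq/z,kyz/a,q;q)_n$ from $\beta_n$ assemble into precisely the very-well-poised pattern $_{10}W_9\!\left(k;k^2/ab,b,\sqrt{qa},-\sqrt{qa},ky/a,kz/a,aq/yz;q,-qa/k\right)$. Here the crucial check is that the pairs $\{q\sqrt k,-q\sqrt k\}$ over $\{\sqrt k,-\sqrt k\}$ supply the well-poised ``$q\sqrt{a_1},-q\sqrt{a_1}$'' factors with base $a_1=k$, and that each remaining upper parameter $a_j$ is matched by a lower parameter $kq/a_j$; confirming these ten/nine matchings and the argument $z=-qa/k$ is the heart of identifying the left side. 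Second, I would substitute the \emph{even}-indexed values $\alpha_{2n}(a,k)$ into the first sum on the right of \eqref{simsuma3}: replacing $n$ by $2n$ in \eqref{singhpr} produces base-$q^2$ Pochhammer symbols via $(x;q)_{2n}=(x,xq;q^2)_n$, and combined with the prefactor ratios $(k^2/ab,b;q^2)_n/((q^2a^2b/k^2,q^2a/b;q^2)_n)$ and $(-qa/k)^{2n}$ these must reorganize into the $_{12}W_{11}$ with base $q^2$, argument $q^2$, and first parameter $a$ shown in the corollary. Third, the analogous substitution of $\alpha_{2n+1}(a,k)$ into the second right-hand sum, again splitting each $(x;q)_{2n+1}$ appropriately, must yield the final $_{12}W_{11}$ with first parameter $q^2a$; along the way the single explicit factor $\alpha_1$ produces the rational prefactor $-q(1-aq^2)(1-y)(1-z)(1-a^2q/kyz)/\big((1-q)(1-aq/y)(1-aq/z)(1-kyz/a)\big)$.

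The main obstacle I anticipate is purely the parity split and the attendant $q\to q^2$ repackaging in steps two and three. The factor $(-qa/k)^{2n+1}$, the shift $a\mapsto q^2a$ in the odd case, and the need to peel off the $n=0$ term of the odd $\alpha$-sequence as an explicit rational prefactor all have to be tracked with care so that the upper and lower parameter lists of each $_{12}W_{11}$ come out correctly well-poised with respect to their stated first parameters ($a$ in the even case, $q^2a$ in the odd case). Identifying the very-well-poised structure on the left is comparatively routine, but I would double-check that the six base-$q$ factors of \eqref{simsuma3} genuinely combine with Singh's four base-$q$ factors to give exactly $_{10}W_9$ and not a spurious extra pair; once all three series are matched to the $_{r+1}W_r$ notation the corollary follows immediately from \eqref{simsuma3}, with no further analysis required.
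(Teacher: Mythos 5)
Your proposal is correct and is exactly the paper's proof: the corollary is obtained by substituting Singh's WP-Bailey pair \eqref{singhpr} into \eqref{simsuma3} and rewriting the three resulting sums in the $_{r+1}W_r$ notation (using $(x;q)_{2n}=(x,xq;q^2)_n$ for the two $\alpha$-sums). The paper states this substitution in a single sentence; your bookkeeping of the well-poised parameter matchings is the implicit verification it leaves to the reader.
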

Remark: The identity above may be regarded as an extension
 of \eqref{waq3f2}, since substituting $y=1$ in  \eqref{singhcoreq}
 leads to a variant of \eqref{waq3f2}.

We next apply the theorem to some WP-Bailey pairs found by Andrews
and Berkovich \cite{AB02}.

\begin{corollary}\label{ab1}
{\allowdisplaybreaks
\begin{multline}\label{ab1eqeq}
\frac{ (q a b/k, k q/b;q)_{\infty}(q,k^2 q/a, q^2 a, q^2
a^2/k^2;q^2)_{\infty}}
 { (k q,q a/k;q)_{\infty}}\\
 \\
 \times \,_{7} \phi _{6} \left [
\begin{matrix}
q\sqrt{k},\,-q\sqrt{k}, \frac{k^2}{a b},\,b,\,\sqrt{q a},
\,-\sqrt{q a},\, \frac{k^2}{q a^2}\\
\sqrt{k},\,-\sqrt{k},\,\frac{q a b}{k},\,\frac{q k}{b},\,
k\sqrt{\frac{q}{a}},\,-k\sqrt{\frac{q}{a}}
\end{matrix}
; q,-\frac{q a}{k} \right ] \\\\
=\left(\frac{k^2q}{a b},b q,\frac{a^2 b q^2}{k^2},\frac{ q^2
a}{b};q^2\right)_{\infty}\phantom{dsasdasdasdasdasdsaasdfdasdadsadaa}\\
\text{\scriptsize{$\times \, _{16} W_{15} \left(a; \frac{k^2}{a
b},b,a q,\frac{k}{a q},\frac{k}{a},a \sqrt{\frac{q}{k}},-a
\sqrt{\frac{q}{k}}, \frac{a q}{\sqrt{k}}, -\frac{a q}{\sqrt{k}} ,a
\sqrt{\frac{q^3}{k}},-a \sqrt{\frac{q^3}{k}}, \frac{a
q^2}{\sqrt{k}},
-\frac{a q^2}{\sqrt{k}};\,q^2,\,q^2 \right)$}}\\
-q \frac{(1-a q^2)(1-q a^2/k)(1-q^2 a^2/k)(1-k/a q)} {(1-q)(1-k)(1-k
q)(1-a^2 q^2/k)} \left(\frac{k^2}{a b},b,\frac{a^2 b
q^3}{k^2},\frac{ q^3
a}{b};q^2\right)_{\infty}\times\\
\text{\tiny{$\,_{16}W_{15}\left(aq^2;\frac{k^2q}{ab},bq,aq,\frac{kq}{a},\frac{k}{a}
,a\sqrt{\frac{q^3}{k}},-a\sqrt{\frac{q^3}{k}},\frac{aq^2}{\sqrt{k}},\frac{-aq^2}{\sqrt{k}},
a\sqrt{\frac{q^5}{k}},-a\sqrt{\frac{q^5}{k}},\frac{aq^3}{\sqrt{k}},\frac{-aq^3}{\sqrt{k}};q^2,q^2\right)$}}.
\end{multline}
}
\end{corollary}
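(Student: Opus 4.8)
The plan is to prove Corollary \ref{ab1} by specializing the master transformation \eqref{simsuma3} of Theorem \ref{t3} to one of the WP-Bailey pairs of Andrews and Berkovich, namely the pair produced by feeding the unit pair into Andrews' second construction \eqref{wpn2}. Concretely I would take
\[
\beta_n(a,k)=\frac{(k^2/qa^2;q)_n}{(q;q)_n},\qquad
\alpha_n(a,k)=\frac{(qa^2/k;q)_{2n}}{(k;q)_{2n}}\,\frac{(q\sqrt a,-q\sqrt a,a,k/qa;q)_n}{(\sqrt a,-\sqrt a,q,q^2a^2/k;q)_n}\left(\frac{k}{a}\right)^n,
\]
which satisfies \eqref{betaneq3} (it is \eqref{wpn2} applied to the unit pair, whose $\beta_j(a,qa^2/k)=\delta_{j,0}$), and substitute $\beta_n$ into the left-hand side and $\alpha_n$ into the right-hand side of \eqref{simsuma3}.

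For the left-hand side, inserting $\beta_n(a,k)$ into the sum $\sum_n\frac{(q\sqrt k,-q\sqrt k,k^2/ab,b,\sqrt{qa},-\sqrt{qa};q)_n}{(\sqrt k,-\sqrt k,qab/k,kq/b,k\sqrt{q/a},-k\sqrt{q/a};q)_n}\left(-qa/k\right)^n\beta_n$ merely appends the single numerator factor $(k^2/qa^2;q)_n$ and the $(q;q)_n$ from $\beta_n$ in the denominator, so the whole left side collapses at once into the $_7\phi_6$ of \eqref{ab1eqeq}; the infinite-product prefactor is inherited verbatim from \eqref{simsuma3}.

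The substantive work is on the right, where I must evaluate $\alpha_{2n}$ and $\alpha_{2n+1}$ and recast the two resulting series in the $_{16}W_{15}$ notation with base $q^2$. First I would track the power: $(k/a)^{2n}(-qa/k)^{2n}=q^{2n}$, confirming the series argument $z=q^2$; for the odd sum I would factor out the $n=0$ term $(-qa/k)\,\alpha_1(a,k)=-q\frac{(1-aq^2)(1-qa^2/k)(1-q^2a^2/k)(1-k/aq)}{(1-q)(1-k)(1-kq)(1-a^2q^2/k)}$, which supplies the explicit coefficient and shifts the very-well-poised base point from $a$ to $aq^2$. The remaining step is to convert each base-$q$ Pochhammer occurring at index $4n$ or $2n$ into base-$q^2$ Pochhammers of length $n$, using $(x;q)_{2m}=(x,xq;q^2)_m$ and $(x;q)_{4m}=(x,xq,xq^2,xq^3;q^4)_m$, and then to read off the thirteen free parameters of each $_{16}W_{15}$.

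The hard part will be exactly this last bookkeeping. One must check that the eight parameters $\pm a\sqrt{q/k},\pm aq/\sqrt k,\pm a\sqrt{q^3/k},\pm aq^2/\sqrt k$ reassemble, in numerator and denominator, to $(qa^2/k;q)_{4n}/(k;q)_{4n}$ (via $(c;q^2)_n(-c;q^2)_n=(c^2;q^4)_n$ followed by $(x,xq,xq^2,xq^3;q^4)_n=(x;q)_{4n}$), that the well-poised block $(q\sqrt a,-q\sqrt a;q)_{2n}/(\sqrt a,-\sqrt a;q)_{2n}$ collapses to $(q^2\sqrt a,-q^2\sqrt a;q^2)_n/(\sqrt a,-\sqrt a;q^2)_n$ (the correct base-$q^2$ factor about $a$), and that $(a,k/qa;q)_{2n}$ together with the denominators $(q,q^2a^2/k;q)_{2n}$ split to furnish the parameters $aq,k/aq,k/a$ and the implicit $(q^2;q^2)_n$; the transformation coefficient $\frac{(k^2/ab,b;q^2)_n}{(q^2a^2b/k^2,q^2a/b;q^2)_n}$ supplies the last two parameters $k^2/ab,b$. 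Carrying out the identical reduction on $\alpha_{2n+1}$ yields the second series $_{16}W_{15}(aq^2;\dots)$, and collecting the product prefactors inherited from \eqref{simsuma3} completes the identity.
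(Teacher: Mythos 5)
Your proposal is correct and follows exactly the paper's route: the paper's proof of Corollary \ref{ab1} consists precisely of inserting the Andrews--Berkovich pair $\alpha_n(a,k)=\frac{(a,q\sqrt{a},-q\sqrt{a},k/aq;q)_n}{(q,\sqrt{a},-\sqrt{a},a^2q^2/k;q)_n}\frac{(qa^2/k;q)_{2n}}{(k;q)_{2n}}(k/a)^n$, $\beta_n(a,k)=\frac{(k^2/qa^2;q)_n}{(q;q)_n}$ into \eqref{simsuma3}, which is the same pair you use. Your extra bookkeeping (splitting the base-$q$ Pochhammers at indices $2n$ and $4n$ into base-$q^2$ and base-$q^4$ blocks, and extracting the $n=0$ term of the odd sum to get the coefficient $-q\frac{(1-aq^2)(1-qa^2/k)(1-q^2a^2/k)(1-k/aq)}{(1-q)(1-k)(1-kq)(1-a^2q^2/k)}$) checks out and merely makes explicit what the paper leaves to the reader.
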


\begin{proof}
Insert the WP-Bailey pair
\begin{align*}
\alpha_n(a,k)&=\frac{(a,q\sqrt{a},-q \sqrt{a}, k/aq;q)_n}
{(q,\sqrt{a}, -\sqrt{a}, a^2 q^2/k;q)_n}\frac{(q
a^2/k;q)_{2n}}{(k;q)_{2n}}
\left(\frac{k}{a} \right)^n,\\
\beta_n(a,k)&=\frac{(k^2/q a^2;q)_n}{(q;q)_n},
\end{align*}
from \cite{AB02} into \eqref{simsuma3}.
\end{proof}

\begin{corollary}\label{ab2}
{\allowdisplaybreaks
\begin{multline}\label{ab2eqeq}
\frac{ (q a b/k, k q/b;q)_{\infty}(q,k^2 q/a, q^2 a, q^2
a^2/k^2;q^2)_{\infty}}
 { (k q,q a/k;q)_{\infty}}\\
 \times \,_{6} \phi _{5} \left [
\begin{matrix}
-q\sqrt{k}, \frac{k^2}{a b},\,b,\,\sqrt{q a},
\,-\sqrt{q a},\, \frac{k^2}{ a^2}\\
-\sqrt{k},\,\frac{q a b}{k},\,\frac{q k}{b},\,
k\sqrt{\frac{q}{a}},\,-k\sqrt{\frac{q}{a}}
\end{matrix}
; q,-\frac{q a}{k} \right ] \\
=\left(\frac{k^2q}{a b},b q,\frac{a^2 b q^2}{k^2},\frac{ q^2
a}{b};q^2\right)_{\infty}\times \phantom{dsasdasdasdasdasdsaasdfdasdadsa}\\
\text{\scriptsize{$\, _{16} W_{15} \left(a; \frac{k^2}{a b},b,a
q,\frac{k q}{a},\frac{k}{a},a \sqrt{\frac{q}{k}},-a
\sqrt{\frac{q}{k}},a \sqrt{\frac{q^3}{k}},-a \sqrt{\frac{q^3}{k}},
 \frac{a }{\sqrt{k}}, \frac{a q}{\sqrt{k}} ,-\frac{a q}{\sqrt{k}},
-\frac{a q^2}{\sqrt{k}};\,q^2,\,q^2 \right)$}}\\
-q\frac{(1-aq^2)\left(1-\frac{k}{a}\right)\left(1-\frac{a}{\sqrt{k}}\right)\left(1+\frac{a
q}{\sqrt{k}}\right)}{(1-q)(1-k q)(1-\sqrt{k}
q)(1+\sqrt{k})}
%\times\phantom{dsasdasdasdas}
%\\
\left(\frac{k^2}{a b},b,\frac{a^2 b q^3}{k^2},\frac{ q^3
a}{b};q^2\right)_{\infty}\times\\
\text{\tiny{$_{16}W_{15}\left(aq^2;\frac{k^2q}{ab},bq,aq,\frac{kq}{a},\frac{kq^2}{a}
,a\sqrt{\frac{q^3}{k}},-a\sqrt{\frac{q^3}{k}},a\sqrt{\frac{q^5}{k}},-a\sqrt{\frac{q^5}{k}},
\frac{aq}{\sqrt{k}},\frac{aq^2}{\sqrt{k}},
\frac{-aq^2}{\sqrt{k}},\frac{-aq^3}{\sqrt{k}};q^2,q^2\right)$}}.
\end{multline}
}
\end{corollary}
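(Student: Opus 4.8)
The plan is to follow the proof of Corollary \ref{ab1} step for step: one reads off from the left-hand side of \eqref{ab2eqeq} which WP-Bailey pair must be fed into the transformation \eqref{simsuma3}. Comparing the ${}_6\phi_5$ in \eqref{ab2eqeq} with the weighted $\beta$-sum on the left of \eqref{simsuma3}, and cancelling the common very-well-poised weight, one is forced to take the Andrews--Berkovich pair \cite{AB02} whose $\beta$-sequence is
\[
\beta_n(a,k)=\frac{(\sqrt{k},\,k^2/a^2;q)_n}{(q,\,q\sqrt{k};q)_n},
\]
together with its listed companion $\alpha_n(a,k)$. First I would check that this pair satisfies the defining relation \eqref{betaneq3}, so that Theorem \ref{t3} is applicable to it.

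The left-hand side is then immediate. Multiplying $\beta_n$ by the weight in \eqref{simsuma3} and simplifying, the factor $(\sqrt{k};q)_n/(q\sqrt{k};q)_n$ in $\beta_n$ cancels the positive branch $(q\sqrt{k};q)_n/(\sqrt{k};q)_n$ of the well-poised weight, so that only the parameters $-q\sqrt{k}$ and $-\sqrt{k}$ survive; the surviving $(k^2/a^2;q)_n$ supplies the extra top parameter and the $(q;q)_n$ in $\beta_n$ becomes the implicit base of the series. This is exactly why the resulting ${}_6\phi_5$ in \eqref{ab2eqeq} is only ``half'' very-well-poised, and it reproduces the left side of \eqref{ab2eqeq} verbatim.

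The real content is the right-hand side. Here I would insert $\alpha_n(a,k)$ into the two sums of \eqref{simsuma3} and treat the even-indexed terms $\alpha_{2n}$ and the odd-indexed terms $\alpha_{2n+1}$ separately. Each base-$q$ Pochhammer factor of $\alpha_n$ must be split according to its index parity via $(x;q)_{2n}=(x;q^2)_n(xq;q^2)_n$ and $(x;q)_{2n+1}=(1-x)(xq;q^2)_n(xq^2;q^2)_n$. For example, a single factor $(a\sqrt{q/k};q)_n$ produces the base-$q^2$ pair $a\sqrt{q/k},\,a\sqrt{q^3/k}$ in the even case and $a\sqrt{q^3/k},\,a\sqrt{q^5/k}$ in the odd case, thereby contributing four of the thirteen upper parameters to each ${}_{16}W_{15}$; the leftover factors $(1-x)$ from the odd split collect into the rational prefactor $-q(1-aq^2)(1-k/a)(1-a/\sqrt{k})(1+aq/\sqrt{k})/[(1-q)(1-kq)(1-\sqrt{k}q)(1+\sqrt{k})]$ and are responsible for the shift $a\mapsto aq^2$ in the base point of the second ${}_{16}W_{15}$. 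Combining these split pieces with the base-$q^2$ coefficients $\frac{(k^2/ab,b;q^2)_n}{(q^2a^2b/k^2,q^2a/b;q^2)_n}$ and $\frac{(k^2q/ab,bq;q^2)_n}{(q^3a^2b/k^2,q^3a/b;q^2)_n}$ already present in \eqref{simsuma3} reassembles precisely the two very-well-poised ${}_{16}W_{15}$ series, each of base $q^2$ and argument $q^2$, that appear on the right of \eqref{ab2eqeq}.

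The main obstacle is exactly this right-hand bookkeeping. One must track roughly a dozen base-$q$ Pochhammer symbols of $\alpha_n$, several carrying $\pm$ square-root arguments, split every one of them correctly at even and at odd indices, and keep careful account of the signs so that, for instance, the unpaired parameters $a/\sqrt{k}$ and $-aq^2/\sqrt{k}$ land in the correct ${}_{16}W_{15}$. Finally one must recognise each reassembled product as a genuine ${}_{16}W_{15}$ in the notation of the paper, checking that each upper parameter $a_j$ is matched by the lower parameter $a_1q^2/a_j$ required there. Once this parity analysis is complete, equality of the two sides is an immediate consequence of Theorem \ref{t3}, and no further summation identity is needed.
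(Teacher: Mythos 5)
Your proposal is correct and follows exactly the paper's route: the paper's proof consists precisely of inserting the Andrews--Berkovich WP-Bailey pair with $\beta_n(a,k)=(\sqrt{k},k^2/a^2;q)_n/(q,q\sqrt{k};q)_n$ (and its companion $\alpha_n$) into \eqref{simsuma3}, and your parity-splitting bookkeeping is the (unwritten) verification behind the paper's ``follows similarly.''
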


\begin{proof}
This transformation follows similarly, after inserting the WP-Bailey
pair
\begin{align*}
\alpha_n(a,k)&=\frac{\left(a,q\sqrt{a},-q \sqrt{a},
a\sqrt{\frac{q}{k}},-a\sqrt{\frac{q}{k}},\frac{a}{\sqrt{k}},-\frac{a
q}{\sqrt{k}},\frac{k}{a};q\right)_n} {\left(q,\sqrt{a}, -\sqrt{a},
\sqrt{q k},-\sqrt{q k}, q \sqrt{k}, -\sqrt{k}, \frac{q a^2}{k}
;q\right)_n}
\left(\frac{k}{a} \right)^n,\\
\beta_n(a,k)&=\frac{\left(\sqrt{k},\frac{k^2}{a^2};q\right)_n}{(q, q
\sqrt{k};q)_n},
\end{align*}
from \cite{AB02} into \eqref{simsuma3}.
\end{proof}

We next consider two WP-Bailey pairs found by Bressoud \cite{B81a}
(see also \cite{AB02}, where these pairs are also investigated). We
do not consider Bressoud's first WP-Bailey pair, since, as remarked
in \cite{AB02}, it is a limiting case of Singh's WP-Bailey pair at
\eqref{singhpr}.

\begin{corollary}\label{br1}
\begin{multline}\label{br1eq}
\frac{ (q a b/k, k q/b;q)_{\infty}(q,k^2 q/a, q^2 a, q^2
a^2/k^2;q^2)_{\infty}}
 { (k q,q a/k;q)_{\infty}}\\
\phantom{dsasdasdasdasdasda} \times \,_{8} W _{7}
\left(k;\frac{k^2}{a b},b,\sqrt{a q}, \frac{a q}{k},
 \frac{-k}{\sqrt{a}};q, -\sqrt{q} \right)\\
=\left(\frac{k^2q}{a b},b q,\frac{a^2 b q^2}{k^2},\frac{ q^2
a}{b};q^2\right)_{\infty}\phantom{dsasdasdasdasdasdsaasdfdasdadsa}\\
\text{\normalsize{$\times \, _{10} W_{9} \left(\sqrt{a};
\frac{k}{\sqrt{a b}},\frac{-k}{\sqrt{a b}}, \sqrt{b},
-\sqrt{b},\sqrt{a q},\frac{a
\sqrt{q}}{k},\frac{a q}{k};\,q,\,q \right)$}}\\
-\sqrt{q}\frac{(1-q \sqrt{a})\left(1-\frac{a\sqrt{q} }{k}\right)}
{(1-\sqrt{q})(1-\frac{k}{\sqrt{a}})} \left(\frac{k^2}{a
b},b,\frac{a^2 b q^3}{k^2},\frac{ q^3
a}{b};q^2\right)_{\infty}\phantom{dsasdasdasdasdasda}\\
\text{\normalsize{$\times\,_{10} W_{9} \left(q\sqrt{a};
k\sqrt{\frac{q}{a b}},-k\sqrt{\frac{q}{a b}}, \sqrt{b q}, -\sqrt{b
q},\sqrt{a q},\frac{a \sqrt{q^3}}{k},\frac{a q}{k};\,q,\,q
\right)$}}.
\end{multline}
\end{corollary}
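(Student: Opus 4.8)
The plan is to proceed exactly as in the proofs of Corollaries~\ref{singhcor}, \ref{ab1}, and~\ref{ab2}: I would substitute Bressoud's second WP-Bailey pair \cite{B81a} (as recorded in \cite{AB02}) directly into the master identity \eqref{simsuma3} and then recognize each of the three resulting sums as a very-well-poised series in the $W$-notation. Reverse-engineering from the left-hand side of \eqref{br1eq}, the $\beta_n$ of the pair must be of the form
\begin{equation*}
\beta_n = \frac{(k,\,aq/k,\,-k/\sqrt{a},\,-k\sqrt{q/a};q)_n}{(q,\,k^2/a,\,-q\sqrt{a},\,-\sqrt{aq};q)_n}\left(\frac{k}{a\sqrt{q}}\right)^n,
\end{equation*}
since multiplying the very-well-poised kernel on the left of \eqref{simsuma3} by this $\beta_n$ supplies the base parameter $k$ and the extra numerator parameters $aq/k$ and $-k/\sqrt{a}$, cancels the surplus factors $-\sqrt{qa}$ and $-k\sqrt{q/a}$, and rescales the argument from $-qa/k$ to $-\sqrt{q}$. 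Thus the left-hand sum collapses, purely by the definition of the $W$-symbol, to the single $\,_{8}W_{7}\!\left(k;k^2/ab,b,\sqrt{aq},aq/k,-k/\sqrt{a};q,-\sqrt{q}\right)$ appearing in \eqref{br1eq}.

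For the right-hand side I would insert the companion $\alpha_n$ of the same pair and carry out the even/odd bisection already built into \eqref{simsuma3}. The even block, in which $\alpha_{2n}$ is paired with $(k^2/ab,b;q^2)_n/(q^2a^2b/k^2,q^2a/b;q^2)_n\,(q^2a^2/k^2)^n$, should reorganize into the base-$q$ series $\,_{10}W_{9}\!\left(\sqrt{a};\dots;q,q\right)$, with the product $(k^2q/ab,bq,q^2a^2b/k^2,q^2a/b;q^2)_\infty$ carried out front unchanged; the odd block, with $\alpha_{2n+1}$, should similarly reorganize into $\,_{10}W_{9}\!\left(q\sqrt{a};\dots;q,q\right)$. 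The appearance of the base parameters $\sqrt{a}$ and $q\sqrt{a}$ and of the square-root arguments $k/\sqrt{ab}$, $\pm\sqrt{b}$ reflects the fact that Bressoud's pair is of ``square-root'' type, so that the bisection of $\alpha_n$ naturally produces two series at base $q$ from the base-$q^2$ factors supplied by \eqref{simsuma3}.

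The substance of the argument is entirely bookkeeping; the step I expect to be most delicate is matching the base-$q^2$ Pochhammer symbols coming from \eqref{simsuma3} against the bisected base-$q$ symbols coming from $\alpha_n$. This requires repeated use of splittings such as $(x;q)_{2n}=(x,xq;q^2)_n$ together with the even/odd separation of the $\pm$-paired and square-root parameters, and it is where sign errors and stray powers of $q$ are most likely to creep in. In particular, pinning down the rational prefactor $-\sqrt{q}\,(1-q\sqrt{a})(1-a\sqrt{q}/k)\big/\big[(1-\sqrt{q})(1-k/\sqrt{a})\big]$ of the odd term amounts to factoring the $n=0$ contribution $-(qa/k)\,\alpha_1$ out of the odd sum and absorbing the leading base-$q^2$ Pochhammer ratios; verifying that this collapses to the stated quotient, and that the remaining series is correctly normalized to $1$ at $n=0$ so that it equals the advertised $\,_{10}W_{9}$, is the one place where the computation must be done with some care.
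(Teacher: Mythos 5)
Your proposal follows exactly the paper's proof, which simply inserts Bressoud's second WP-Bailey pair into \eqref{simsuma3}; your reverse-engineered $\beta_n$ agrees with the paper's (whose $(-k/\sqrt{a};\sqrt{q})_{2n}/(-\sqrt{aq};\sqrt{q})_{2n}$ factor splits into your four base-$q$ symbols), and your identification of the odd-term prefactor as $-(qa/k)\alpha_1$ checks out. The bookkeeping you outline is precisely what the paper leaves implicit.
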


\begin{proof}
Insert Bressoud's second WP-Bailey pair
\begin{align*}
\alpha_n(a,k)&=\frac{1-\sqrt{a}\,q^n}{1-\sqrt{a}}\,
\frac{\left(\sqrt{a},\frac{a
\sqrt{q}}{k};\sqrt{q}\right)_n} {\left(\sqrt{q},\frac{k}{
\sqrt{a}};\sqrt{q}\right)_n}
\left(\frac{k}{a \sqrt{q}} \right)^n,\\
\beta_n(a,k)&=\frac{\left(k,\frac{a q}{k};q\right)_n}{\left(q,
\frac{k^2}{a};q\right)_n}
\frac{\left(\frac{-k}{\sqrt{a}};\sqrt{q}\right)_{2n}}{\left( -
\sqrt{a q};\sqrt{q}\right)_{2n}}\left(\frac{k}{a \sqrt{q}}
\right)^n,
\end{align*}
into \eqref{simsuma3}.
\end{proof}

\begin{corollary}\label{br2}
\begin{multline}\label{br2eq}
\frac{ (q a b/k, k q/b;q)_{\infty}(q,k^2 q/a, q^2 a, q^2
a^2/k^2;q^2)_{\infty}}
 { (k q,q a/k;q)_{\infty}}\\
\phantom{dsasdasdasdasdasda} \times \,_{8} W _{7}
\left(k;\frac{k^2}{a b},b,\sqrt{a q}, \frac{a}{k},
 \frac{-k q}{\sqrt{a}};q, -\sqrt{q} \right)\\
=\left(\frac{k^2q}{a b},b q,\frac{a^2 b q^2}{k^2},\frac{ q^2
a}{b};q^2\right)_{\infty}\times \phantom{dsasdasdasdasdasdsaasdfghdasdad}\\
\text{\normalsize{$ \, _{12} W_{11} \left(\sqrt{a} ; \,i \,q\,
a^{1/4},-i\, q\, a^{1/4},\frac{k}{\sqrt{a b}},\frac{-k}{\sqrt{a b}},
\sqrt{b}, -\sqrt{b},\sqrt{a q},\frac{a}{k},\frac{a
\sqrt{q}}{k};\,q,\,q \right)$}}\\
-\sqrt{q}\frac{(1-a q^2 )\left(1-\frac{a }{k}\right)}
{(1-\sqrt{q})\left(1-k\sqrt{\frac{q}{a}}\right)(1+\sqrt{a})}
\left(\frac{k^2}{a b},b,\frac{a^2 b q^3}{k^2},\frac{ q^3
a}{b};q^2\right)_{\infty}\times \phantom{dsasdasdasa}\\
\text{\footnotesize{$\,_{12} W_{11} \left(q\sqrt{a}; \,i \,q^{3/2}
a^{1/4},-i\, q^{3/2} a^{1/4},k\sqrt{\frac{q}{a
b}},-k\sqrt{\frac{q}{a b}}, \sqrt{b q}, -\sqrt{b q},\sqrt{a
q},\frac{a \sqrt{q}}{k},\frac{a q}{k};\,q,\,q \right)$}}.
\end{multline}
\end{corollary}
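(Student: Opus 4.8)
The plan is to treat Corollary \ref{br2} exactly as the preceding corollaries are treated: it is obtained by specialising the master transformation \eqref{simsuma3} of Theorem \ref{t3} to one explicitly known WP-Bailey pair, namely Bressoud's remaining pair (the companion of the pair used in the proof of Corollary \ref{br1}), whose sequences $\alpha_n(a,k)$ and $\beta_n(a,k)$ are written in the base $\sqrt q$ and carry the very-well-poised head $(1-\sqrt a\,q^n)/(1-\sqrt a)$. Since such a pair is already known to satisfy the defining relation \eqref{betaneq3}, no new identity has to be proved; the entire task is to substitute it into \eqref{simsuma3} and then recognise each of the three resulting sums as a very-well-poised ${}_{r+1}W_{r}$ series. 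Thus the one-line proof reads ``insert the pair into \eqref{simsuma3} and simplify,'' and what follows describes how I would carry out that simplification.

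For the left-hand side I would form the single sum $\sum_n \frac{(q\sqrt k,-q\sqrt k,k^2/ab,b,\sqrt{qa},-\sqrt{qa};q)_n}{(\sqrt k,-\sqrt k,qab/k,kq/b,k\sqrt{q/a},-k\sqrt{q/a};q)_n}\,(-qa/k)^n\,\beta_n$. The quotient $(q\sqrt k,-q\sqrt k;q)_n/(\sqrt k,-\sqrt k;q)_n=(1-kq^{2n})/(1-k)$ already provides the very-well-poised head based at $k$, so the remaining job is to merge the base-$q$ factors of $\beta_n$ with the kernel and to dispose of the base-$\sqrt q$ factor $(-kq/\sqrt a;\sqrt q)_{2n}/(-\sqrt{aq};\sqrt q)_{2n}$ by means of the splitting $(x;\sqrt q)_{2n}=(x;q)_n\,(x\sqrt q;q)_n$. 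This splitting converts the $\sqrt q$-symbols into base-$q$ symbols, produces the parameters $\sqrt{aq}$ and $-kq/\sqrt a$, and accounts for $-\sqrt q$ as the argument, so that the whole left side collapses to the claimed ${}_{8}W_{7}(k;k^2/ab,b,\sqrt{aq},a/k,-kq/\sqrt a;q,-\sqrt q)$ times the displayed infinite-product prefactor.

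For the right-hand side I would split \eqref{simsuma3} into its even and odd sums and evaluate $\alpha_m$ at $m=2n$ and $m=2n+1$. In the even case the head $(1-\sqrt a\,q^{2n})/(1-\sqrt a)$ is precisely the very-well-poised head of a series based at $\sqrt a$, while the base-$\sqrt q$ symbols $(\sqrt a,a\sqrt q/k;\sqrt q)_{2n}$ and $(\sqrt q,k/\sqrt a;\sqrt q)_{2n}$ reduce to base $q$ by the same splitting, producing the numerator parameters $\sqrt{aq},\,a\sqrt q/k,\,a/k$ and their matching denominators. The imaginary unit enters at exactly this point: after the splitting one is left with a quadratic factor of the form $(-q^2\sqrt a;q^2)_n$, and since $(iqa^{1/4},-iqa^{1/4};q)_n=(-q^2\sqrt a;q^2)_n$, this factor must be displayed in the base-$q$ $W$-notation as the conjugate pair $iqa^{1/4},\,-iqa^{1/4}$ visible in the first ${}_{12}W_{11}$. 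Reindexing the odd sum in the same way, now with base point $q\sqrt a$ (so that $(1-\sqrt a\,q^{2n+1})/(1-\sqrt a)$ becomes the head based at $q\sqrt a$ up to a constant shunted into the prefactor), yields the second ${}_{12}W_{11}$ together with the scalar factor $-\sqrt q\,(1-aq^2)(1-a/k)/[(1-\sqrt q)(1-k\sqrt{q/a})(1+\sqrt a)]$ shown in \eqref{br2eq}.

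The main obstacle is this simultaneous juggling of the three bases $\sqrt q$, $q$ and $q^2$ that are present at once in Bressoud's pair and in \eqref{simsuma3}: one must keep the half- and quarter-integer powers of $q$ and $a$ straight, split every base-$\sqrt q$ Pochhammer symbol correctly into its base-$q$ factors, separate the even and odd indices without an off-by-one slip, and then regroup the leftover quadratic factors into the $\pm$ and $\pm i$ conjugate pairs demanded by the very-well-poised template based at $\sqrt a$. Because a single sign or indexing error would stay invisible until the very end, I would confirm the final identity numerically, evaluating both sides for several random choices of $a,b,k$ and small $q$ with the three series truncated, before regarding Corollary \ref{br2} as established.
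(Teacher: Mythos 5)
Your overall strategy is exactly the paper's: Corollary \ref{br2} is proved by the one-line argument of inserting Bressoud's third WP-Bailey pair into \eqref{simsuma3} and simplifying. However, the pair you actually describe is not that pair. The data you attribute to it --- the very-well-poised head $(1-\sqrt{a}\,q^{n})/(1-\sqrt{a})$ and a $\beta_n$ carrying a base-$\sqrt{q}$ ratio of the form $(-kq/\sqrt{a};\sqrt{q})_{2n}/(-\sqrt{aq};\sqrt{q})_{2n}$ --- belong (up to a shift) to Bressoud's \emph{second} pair, the one used for Corollary \ref{br1}. The pair needed here has $\alpha_n(a,k)=\frac{1-aq^{2n}}{1-a}\,\frac{(\sqrt{a},\,a/k;\sqrt{q})_n}{(\sqrt{q},\,k\sqrt{q/a};\sqrt{q})_n}\bigl(\frac{k}{a\sqrt{q}}\bigr)^n$ and $\beta_n(a,k)=\frac{(k,\,a/k,\,-k\sqrt{q/a},\,-kq/\sqrt{a};q)_n}{(q,\,qk^2/a,\,-\sqrt{a},\,-\sqrt{aq};q)_n}\bigl(\frac{k}{a\sqrt{q}}\bigr)^n$; note that this $\beta_n$ is already entirely in base $q$, so the left side of \eqref{br2eq} falls out by direct cancellation against the kernel of \eqref{simsuma3} (the argument $-\sqrt{q}$ being $(-qa/k)\cdot k/(a\sqrt{q})$), with no splitting needed there.

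The distinction matters because the feature that separates \eqref{br2eq} from \eqref{br1eq} --- the parameters $\pm i q a^{1/4}$ and the $(1+\sqrt{a})$ in the odd-sum coefficient --- comes precisely from the head $(1-aq^{2n})/(1-a)$: at index $2n$ it equals $\frac{1-\sqrt{a}q^{2n}}{1-\sqrt{a}}\cdot\frac{1+\sqrt{a}q^{2n}}{1+\sqrt{a}}$, and the second factor is $(iqa^{1/4},-iqa^{1/4};q)_n/(ia^{1/4},-ia^{1/4};q)_n$, which is exactly what pushes the series based at $\sqrt{a}$ up from a $_{10}W_9$ to a $_{12}W_{11}$. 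With the head you state, $\alpha_{2n}$ would supply the $\sqrt{a}$-based head with nothing left over, and you would land on a $_{10}W_9$ as in \eqref{br1eq} rather than the claimed right side; your later remark that a quadratic factor $(-q^2\sqrt{a};q^2)_n$ survives and accounts for the imaginary parameters is the correct observation, but it is inconsistent with the head you started from. Once the correct pair is substituted, the rest of your outline --- splitting $(x;\sqrt{q})_{2n}=(x;q)_n(x\sqrt{q};q)_n$ in $\alpha$, separating even and odd indices, and a numerical check --- is sound and reproduces the paper's proof.
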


\begin{proof}
Insert Bressoud's third WP-Bailey pair
\begin{align*}
\alpha_n(a,k)&=\frac{1-a\,q^{2n}}{1-a}\,
\frac{\left(\sqrt{a},\frac{a}{k};\sqrt{q}\right)_n}
{\left(\sqrt{q},k\sqrt{\frac{q}{ a}};\sqrt{q}\right)_n}
\left(\frac{k}{a \sqrt{q}} \right)^n,\\
\beta_n(a,k)&=\frac{\left(k,\frac{a}{k},-k\sqrt{\frac{q}{a}},-\frac{k
q}{\sqrt{a}};q\right)_n}{\left(q, \frac{q k^2}{a},-\sqrt{a},-\sqrt{a
q};q\right)_n} \left(\frac{k}{a \sqrt{q}} \right)^n,
\end{align*}
into \eqref{simsuma3}.
\end{proof}

Finally, we apply the theorem to three WP-Bailey pairs found by the
present authors in \cite{MZ07b}:
\begin{align}\label{mz01}
\alpha_n^{(1)}(a,k)&=\frac{(q a^2/k^2;q)_n}{(q,q)_n}\left(
\frac{k}{a}\right)^n,\\
\beta_n^{(1)}(a,k)&=\frac{(q
a/k,k;q)_n}{(k^2/a,q,q)_n}\frac{(k^2/a;q)_{2n}}{(a q,q)_{2n}}.\notag
\end{align}

\begin{align}\label{mz02}
\alpha_n^{(2)}(a,k)&=\frac{(a, \,q \sqrt{a},\, -q \sqrt{a},
\,k/a,\,a \sqrt{q/k}, \, -a \sqrt{q/k};q)_n}
{(\sqrt{a},\,-\sqrt{a},\, q
a^2/k,\,\sqrt{qk},\, -\sqrt{qk},\,q;q)_n}\,(-1)^n,\\
\beta_n^{(2)}(a,k)&=\begin{cases}
\displaystyle{\frac{(k,k^2/a^2;q^2)_{n/2}}{(q^2, q^2
a^2/k;q^2)_{n/2}}}, & n \text{ even},\\
0,& n \text{ odd}.
\end{cases}\notag
\end{align}

\begin{align}\label{mz03}
\alpha_n^{(3)}(a,q)&=\frac{(a, \,q \sqrt{a},\, -q \sqrt{a}, \,d,\,
q/d, \, -a;q)_n} {(\sqrt{a},\,-\sqrt{a},\, aq/d,\,a d,\,
-q,\,q;q)_n}\,(-1)^n,\\
\beta_n^{(3)}(a,q)&=\begin{cases}
\displaystyle{\frac{(q^2/ad,dq/a;q^2)_{n/2}}{(adq,aq^2/d;q^2)_{n/2}}}, & n \text{ even},\\
\displaystyle{-a\frac{(q/ad,d/a;q^2)_{(n+1)/2}}{(ad,aq/d;q^2)_{(n+1)/2}}},&
n \text{ odd}.
\end{cases}\notag
\end{align}

Note that the third pair is restricted in the sense that it is
necessary to set $k=q$ for \eqref{betaneq3} to hold.

\begin{corollary}\label{mz1}
{\allowdisplaybreaks
\begin{multline}\label{mz1eq}
\frac{ (q a b/k, k q/b;q)_{\infty}(q,k^2 q/a, q^2 a, q^2
a^2/k^2;q^2)_{\infty}}
 { (k q,q a/k;q)_{\infty}}\\
\phantom{dsasdasdasdasdasda} \times \,_{8} W _{7}
\left(k;\frac{k^2}{a b},b, \frac{q a}{k},
 \frac{k}{\sqrt{a}},
 \frac{-k}{\sqrt{a}};q, \frac{-q a}{k} \right)\\
=\left(\frac{k^2q}{a b},b q,\frac{a^2 b q^2}{k^2},\frac{ q^2
a}{b};q^2\right)_{\infty} \, _{4} \phi_{3}
\left [
\begin{matrix}
\frac{k^2}{a b},\,b,\frac{q a^2}{ k^2},\frac{q^2 a^2}{ k^2}\\
\\
\frac{q^2 a^2 b}{k^2},\,\frac{q^2 a}{b},\, q
\end{matrix}
; q^2,q^2 \right ]\phantom{dsasdasdasdas} \\
-q\frac{\left(1-\frac{q a^2 }{k^2}\right)} {(1-q)}
\left(\frac{k^2}{a b},b,\frac{a^2 b q^3}{k^2},\frac{ q^3
a}{b};q^2\right)_{\infty} \, _{4} \phi_{3}\left [
\begin{matrix}
\frac{k^2q}{a b},\,b q,\frac{q^2 a^2}{ k^2},\frac{q^3 a^2}{ k^2}\\
\\
\frac{q^3 a^2 b}{k^2},\,\frac{q^3 a}{b},\, q^3
\end{matrix}
; q^2,q^2 \right ].
\end{multline}
}
\end{corollary}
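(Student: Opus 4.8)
The plan is to substitute the WP-Bailey pair \eqref{mz01} directly into the master transformation \eqref{simsuma3} of Theorem \ref{t3} and simplify each side. The only structural facts I need are the two quadratic identities $(\sqrt{x};q)_n(-\sqrt{x};q)_n = (x;q^2)_n$ and the duplication $(x;q)_{2n} = (x;q^2)_n(xq;q^2)_n$, together with the companion $(x;q)_{2n+1} = (x;q^2)_{n+1}(xq;q^2)_n$ for the odd case; these are the only tools needed to pass between base $q$ and base $q^2$, which is the mechanism converting the single base-$q$ sum on each side of \eqref{simsuma3} into the base-$q^2$ series of the corollary.

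First I would treat the left-hand side. Inserting $\beta_n=\beta_n^{(1)}(a,k)$, the summand is the fixed very-well-poised quotient of \eqref{simsuma3} times $\beta_n^{(1)}$. The factors $q\sqrt{k},-q\sqrt{k},\sqrt{k},-\sqrt{k},k^2/ab,b,qab/k,kq/b$ and the power $(-qa/k)^n$ already coincide with those of the target ${}_8W_7(k;k^2/ab,b,qa/k,k/\sqrt a,-k/\sqrt a;q,-qa/k)$, so the content of the step is to show that the remaining fixed factors $(\sqrt{qa},-\sqrt{qa};q)_n/(k\sqrt{q/a},-k\sqrt{q/a};q)_n$ multiplied by $\beta_n^{(1)}$ reproduce the surviving very-well-poised factors $(k,qa/k,k/\sqrt a,-k/\sqrt a;q)_n/[(q,k^2/a,q\sqrt a,-q\sqrt a;q)_n]$. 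Applying the first quadratic identity gives $(\sqrt{qa},-\sqrt{qa};q)_n=(qa;q^2)_n$, $(k\sqrt{q/a},-k\sqrt{q/a};q)_n=(k^2q/a;q^2)_n$, $(k/\sqrt a,-k/\sqrt a;q)_n=(k^2/a;q^2)_n$ and $(q\sqrt a,-q\sqrt a;q)_n=(q^2a;q^2)_n$; recombining the base-$q^2$ products by the duplication identity, $(k^2/a;q^2)_n(k^2q/a;q^2)_n=(k^2/a;q)_{2n}$ and $(qa;q^2)_n(q^2a;q^2)_n=(qa;q)_{2n}$, one checks that the required identity holds precisely because $\beta_n^{(1)}=(k,qa/k;q)_n(k^2/a;q)_{2n}/[(q,k^2/a;q)_n(aq;q)_{2n}]$, which is the pair \eqref{mz01}. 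Hence the left-hand side is the claimed ${}_8W_7$.

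Next I would expand the two right-hand sums of \eqref{simsuma3}, which need $\alpha_{2n}^{(1)}$ and $\alpha_{2n+1}^{(1)}$. For the even sum, substitute $\alpha_{2n}^{(1)}=(qa^2/k^2;q)_{2n}(k/a)^{2n}/(q;q)_{2n}$; the factor $(k/a)^{2n}(-qa/k)^{2n}=(q^2)^n$ supplies the base-$q^2$ argument, while duplication splits $(qa^2/k^2;q)_{2n}=(qa^2/k^2;q^2)_n(q^2a^2/k^2;q^2)_n$ and $(q;q)_{2n}=(q;q^2)_n(q^2;q^2)_n$, yielding exactly the first ${}_4\phi_3$ with upper row $k^2/ab,b,qa^2/k^2,q^2a^2/k^2$ and lower row $q^2a^2b/k^2,q^2a/b,q$ in base $q^2$. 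For the odd sum I would use $(x;q)_{2n+1}=(x;q^2)_{n+1}(xq;q^2)_n$ for $x=qa^2/k^2$ and $x=q$, peel off the $n=0$ factors $(1-qa^2/k^2)$ and $(1-q)$ to form the prefactor $-q(1-qa^2/k^2)/(1-q)$ — the leading $-q$ coming from $(k/a)^{2n+1}(-qa/k)^{2n+1}=-q(q^2)^n$ — and read off the second ${}_4\phi_3$ with upper row $k^2q/ab,bq,q^2a^2/k^2,q^3a^2/k^2$ and lower row $q^3a^2b/k^2,q^3a/b,q^3$. The infinite-product prefactors on both right-hand terms already appear verbatim in \eqref{simsuma3}, so nothing more is required there.

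The one genuinely delicate point is the left-hand collapse: after the quadratic substitutions one must verify that \emph{every} extraneous factor in the product of the fixed quotient with $\beta_n^{(1)}$ cancels, so that a single very-well-poised summand survives rather than a doubly-indexed expression. This is where bookkeeping errors are most likely, and I would check the match numerator-against-numerator and denominator-against-denominator as above; by contrast, the right-hand side is an essentially mechanical even/odd split once the two quadratic identities are in place.
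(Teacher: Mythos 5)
Your proposal is correct and is exactly the paper's argument: the paper's proof of Corollary \ref{mz1} consists solely of the instruction to insert the WP-Bailey pair \eqref{mz01} into \eqref{simsuma3}, and your computation carries out that insertion in full, with the quadratic and duplication identities handling the base change correctly on both sides.
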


\begin{proof}
Insert the WP-Bailey pair at \eqref{mz01} into \eqref{simsuma3}.
\end{proof}

Remark: The substitution $k=a\sqrt{q}$ also gives a special case of
\eqref{waq3f2}.

\begin{corollary}\label{mz2}
\begin{multline}\label{mz2eq}
\frac{ (q a b/k, k q/b;q)_{\infty}(q,k^2 q/a, q^2 a, q^2
a^2/k^2;q^2)_{\infty}}
 { (k q,q a/k;q)_{\infty}}\times \\
_{12} W _{11} \left(k;\frac{k^2}{a b},\frac{k^2 q}{a b},b,b
q,\sqrt{a q},
-\sqrt{a q}, \sqrt{a q^3}, -\sqrt{a q^3}, \frac{k^2}{a^2};q^2, \frac{a^2q^2}{k^2} \right)\\
=\left(\frac{k^2q}{a b},b q,\frac{a^2 b q^2}{k^2},\frac{ q^2
a}{b};q^2\right)_{\infty}\times \phantom{dsasdasdasdasdasdsaasdfdaadsa}\\
\text{\normalsize{$ _{12} W_{11} \left(a; \frac{k^2}{a b},b,a
q,\frac{k}{a},\frac{k q}{a},
a\sqrt{\frac{q}{k}},-a\sqrt{\frac{q}{k}},
a\sqrt{\frac{q^3}{k}},-a\sqrt{\frac{q^3}{k}};q^2,\frac{q^2a^2}{k^2} \right)$}}\\
+\frac{q a}{k}\frac{(1-a q^2)\left(1-\frac{k }{a}\right)} {(1-q)(1-k
q)} \left(\frac{k^2}{a b},b,\frac{a^2 b q^3}{k^2},\frac{ q^3
a}{b};q^2\right)_{\infty}\times\phantom{dsasdasdasddasda}\\
\text{\small{$_{12}
W_{11}\left(aq^2;\frac{k^2q}{ab},bq,aq,\frac{kq}{a},\frac{kq^2}{a},
a\sqrt{\frac{q^3}{k}},-a\sqrt{\frac{q^3}{k}},
a\sqrt{\frac{q^5}{k}},-a\sqrt{\frac{q^5}{k}};q^2,\frac{q^2a^2}{k^2}
\right)$}}.
\end{multline}
\end{corollary}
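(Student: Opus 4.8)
The plan is to substitute the second McLaughlin--Zimmer pair \eqref{mz02} directly into the master transformation \eqref{simsuma3} of Theorem \ref{t3}, and then to recognize each of the three resulting sums as a very-well-poised series in the base $q^2$. The entire argument is a conversion of base-$q$ $q$-shifted factorials of even (resp.\ odd) index into base-$q^2$ factorials of index $m$, using the elementary splittings $(x;q)_{2m}=(x,xq;q^2)_m$ and $(x;q)_{2m+1}=(1-x)(xq,xq^2;q^2)_m$, together with the very-well-poised collapse $\frac{(q\sqrt{c},-q\sqrt{c};q)_{N}}{(\sqrt{c},-\sqrt{c};q)_{N}}=\frac{1-cq^{2N}}{1-c}$. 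The common infinite-product prefactors require no work: the prefactor on the left of \eqref{simsuma3} is already identical to the one on the left of \eqref{mz2eq}, and the two $q^2$-product prefactors on the right of \eqref{simsuma3} agree verbatim with those multiplying the two $q^2$-series in \eqref{mz2eq}.

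First I would treat the left-hand side of \eqref{simsuma3}. Since $\beta_{n}^{(2)}$ vanishes for odd $n$, only the terms with $n=2m$ survive. In the surviving sum I would rewrite each factor of $(q\sqrt{k},-q\sqrt{k},k^2/ab,b,\sqrt{qa},-\sqrt{qa};q)_{2m}$ and of the denominator as a base-$q^2$ factorial of index $m$: the pair $q\sqrt{k},-q\sqrt{k}$ over $\sqrt{k},-\sqrt{k}$ collapses to the very-well-poised factor $(1-kq^{4m})/(1-k)$ with base point $a_1=k$, while $(k^2/ab;q)_{2m}=(k^2/ab,k^2q/ab;q^2)_m$, $(b;q)_{2m}=(b,bq;q^2)_m$, and $(\sqrt{qa};q)_{2m}=(\sqrt{aq},\sqrt{aq^3};q^2)_m$ (with its minus-sign partner) supply upper parameters, the remaining two upper parameters coming from $(k,k^2/a^2;q^2)_m$ inside $\beta_{2m}^{(2)}$. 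The denominators, together with $(-qa/k)^{2m}=(q^2a^2/k^2)^m$, then produce exactly the lower parameters $a_1q^2/a_j$ and the argument $z=q^2a^2/k^2$ of the ${}_{12}W_{11}(k;\dots)$ on the left of \eqref{mz2eq}.

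Next I would process the right-hand side, where both the even- and odd-index sums contribute because $\alpha_{n}^{(2)}$ never vanishes. For the even sum I substitute $\alpha_{2m}^{(2)}$, convert every base-$q$ factorial of index $2m$ to base $q^2$, and note that $q\sqrt{a},-q\sqrt{a}$ over $\sqrt{a},-\sqrt{a}$ cancels down to $(1-aq^{4m})/(1-a)$, the very-well-poised factor for base point $a_1=a$; the leftover factors, combined with the prefactor $\frac{(k^2/ab,b;q^2)_m}{(q^2a^2b/k^2,q^2a/b;q^2)_m}$ and the power $(q^2a^2/k^2)^m$, reassemble into the first ${}_{12}W_{11}(a;\dots)$ of \eqref{mz2eq}. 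For the odd sum I instead use $(x;q)_{2m+1}=(1-x)(xq,xq^2;q^2)_m$; the base point is now shifted to $a_1=aq^2$, the factor $q\sqrt{a},-q\sqrt{a}$ over $\sqrt{a},-\sqrt{a}$ collapsing to $(1-aq^{4m+2})/(1-aq^2)$, and the residual $q^2$-factorials with the prefactor $\frac{(k^2q/ab,bq;q^2)_m}{(q^3a^2b/k^2,q^3a/b;q^2)_m}$ form the second ${}_{12}W_{11}(aq^2;\dots)$.

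The main obstacle is the bookkeeping in the odd-index case. Each splitting of $(x;q)_{2m+1}$ throws off an $m$-independent factor $(1-x)$, and these must be collected and simplified --- via differences of squares such as $(1-q\sqrt{a})(1+q\sqrt{a})=1-q^2a$ and $(1-\sqrt{qk})(1+\sqrt{qk})=1-qk$, and the cancellation of the factors $(1-a)$ and $(1-qa^2/k)$ common to numerator and denominator --- into precisely the rational coefficient $\frac{qa}{k}\frac{(1-aq^2)(1-k/a)}{(1-q)(1-kq)}$ displayed in \eqref{mz2eq}. Tracking the base shift $a\mapsto aq^2$ correctly, so that every very-well-poised lower parameter $a_1q^2/a_j$ lands on its claimed value, is the most error-prone part; once that matching is confirmed for both halves of the right-hand side, the corollary follows from Theorem \ref{t3}.
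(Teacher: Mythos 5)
Your proposal is correct and is exactly the paper's route: the paper's proof of Corollary \ref{mz2} consists of the single instruction to insert the pair \eqref{mz02} into \eqref{simsuma3}, and your write-up supplies the base-change bookkeeping ($(x;q)_{2m}=(x,xq;q^2)_m$, $(x;q)_{2m+1}=(1-x)(xq,xq^2;q^2)_m$, and the very-well-poised collapses) that this instruction implicitly requires. The collected constant factors in the odd-index sum do simplify to $\frac{qa}{k}\frac{(1-aq^2)(1-k/a)}{(1-q)(1-kq)}$ as you claim, so nothing is missing.
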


\begin{proof}
Insert the WP-Bailey pair at \eqref{mz02} into \eqref{simsuma3}.
\end{proof}

\begin{corollary}\label{mz3}
{\allowdisplaybreaks
\begin{multline}\label{mz3eq}
\frac { \left(a b,
\displaystyle{\frac{q^2}{b}};q\right)_{\infty}\left(q,
\displaystyle{\frac{q^3}{a}}, q^2 a, a^2;q^2\right)_{\infty}}
{\left
( q^2, a;q\right)_{\infty}}\times\\
\Bigg[\,_{14}W_{13} \left ( q;\frac{q^2}{ab}, \frac{q^3}{ab},b,bq,
\sqrt{qa},-\sqrt{ qa}, \sqrt{q^3a},-\sqrt{ q^3a},
\frac{q^2}{ad},\frac{qd}{a},q^2; q^2,a^2 \right )
\\
+ a^2\frac{(1-q^3)\left( 1-\frac{q^2}{ab}\right)(1-b)(1-aq)
\left(1-\frac{q}{ad} \right)\left(1-\frac{d}{a}\right)} {(1-q)\left(
1-\frac{q^2}{b}\right)(1-ab)(1-ad)
\left(1-\frac{q^3}{a} \right)\left(1-\frac{a q}{d}\right)}\times\\
\text{\small{$_{14}W_{13}\left(q^3;\frac{q^3}{ab},\frac{q^4}{ab},bq,bq^2,
\sqrt{q^3a},-\sqrt{ q^3a},\sqrt{q^5a},-\sqrt{q^5a},
\frac{q^3}{ad},\frac{q^2d}{a},q^2; q^2,a^2\right)$}}\Bigg]
\\
    =  \left(a^2 b, b q,
\displaystyle{\frac{q^{3}}{a b}, \frac{q^{2}a}{
b}};q^2\right)_{\infty}
%\times \,
%\phantom{asdadsdasdasdasdasd}\\
%\phantom{asdadsdasdasdasdasd}
 \,_{12}W_{11} \left (
a;\frac{q^2}{ab},b, aq, -a, -aq, d,dq, \frac{ q}{d},\frac{q^2}{d}
; q^2,a^2 \right )\\
+ a\frac{(1-aq^2)(1-d)\left(1-\frac{q}{d}\right)(1+a)}
{(1-q^2)\left(1-\frac{a q}{d}\right)(1-ad)}\left(qa^2 b, b,
\displaystyle{\frac{q^{2}}{a b}, \frac{q^{3}a}{
b}};q^2\right)_{\infty}
\times\, \\
_{12}W_{11} \left ( aq^2;\frac{q^3}{ab},bq, aq, -aq, -aq^2, dq,dq^2,
\frac{q^2}{d},\frac{q^3}{d} ; q^2,a^2 \right ).
\end{multline}
}
\end{corollary}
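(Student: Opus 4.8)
The plan is to obtain Corollary \ref{mz3} as a specialization of the master transformation \eqref{simsuma3}, inserting the third WP-Bailey pair \eqref{mz03} and taking $k=q$ throughout. The choice $k=q$ is forced here, since, as noted after \eqref{mz03}, the defining relation \eqref{betaneq3} holds for that pair only when $k=q$. I would begin by substituting $k=q$ into the three blocks of infinite products in \eqref{simsuma3} and simplifying: on the left the constant collapses to $(ab, q^2/b; q)_\infty (q, q^3/a, q^2 a, a^2; q^2)_\infty / (q^2, a; q)_\infty$, while the two product prefactors on the right become $(a^2 b, bq, q^3/ab, q^2 a/b; q^2)_\infty$ and $(qa^2 b, b, q^2/ab, q^3 a/b; q^2)_\infty$. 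These already agree with the outer constants displayed in \eqref{mz3eq}, so the substance of the corollary is the identification of the four $W$-series.

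Next I would handle the single left-hand sum $\sum_n(\cdots)\beta_n$. Because $\beta_n^{(3)}$ in \eqref{mz03} is given by different expressions for even and odd $n$, the natural step is to split this sum by the parity of $n$. Writing $n=2m$ and $n=2m+1$ and applying $(x;q)_{2m}=(x;q^2)_m(xq;q^2)_m$ to every base-$q$ Pochhammer symbol — both those from the kernel of \eqref{simsuma3} at $k=q$ and those from $\beta_n^{(3)}$ — the even part assembles into the first $_{14}W_{13}$ and the odd part into the second $_{14}W_{13}$ inside the bracket of \eqref{mz3eq}. The coefficient $a^2(1-q^3)(1-q^2/ab)(1-b)(1-aq)(1-q/ad)(1-d/a)/[(1-q)(1-q^2/b)(1-ab)(1-ad)(1-q^3/a)(1-aq/d)]$ multiplying the second $_{14}W_{13}$ is precisely the $n=1$ term of the original left sum, factored out so that the remaining series is normalized to have leading term $1$.

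I would treat the right-hand side in the same way. The two sums in \eqref{simsuma3} run over $\alpha_{2n}$ and $\alpha_{2n+1}$; substituting $\alpha_n^{(3)}$ and again converting each $(x;q)_{2n}$ and $(x;q)_{2n+1}$ into base-$q^2$ symbols turns the even sum into the first $_{12}W_{11}(a;\dots;q^2,a^2)$ and the odd sum into the second $_{12}W_{11}(aq^2;\dots;q^2,a^2)$. The prefactor $a(1-aq^2)(1-d)(1-q/d)(1+a)/[(1-q^2)(1-aq/d)(1-ad)]$ is again nothing but the leading ($n=0$) contribution $-a\,\alpha_1^{(3)}$ of the odd $\alpha$-sum pulled out to normalize that series.

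The main obstacle is organizational rather than conceptual: the bookkeeping needed to split each base-$q$ rising factorial into its base-$q^2$ factors and then to verify that the resulting parameter collections genuinely have the very-well-poised shape demanded by the $_{14}W_{13}$ and $_{12}W_{11}$ notations. Concretely, one must check that the $k=q$ specialization produces the implicit very-well-poised numerator entries $\pm q^2\sqrt{a_1}$ together with the correct companion denominator entries $a_1 q^2/a_j$ for each listed parameter $a_j$, and that the argument and base of every $W$-series come out as $a^2$ and $q^2$. Once this matching is checked parameter by parameter, the identity of Corollary \ref{mz3} follows immediately from Theorem \ref{t3}.
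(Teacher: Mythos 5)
Your proposal is correct and follows exactly the route the paper takes: the paper's entire proof is ``Insert the WP-Bailey pair at \eqref{mz03} into \eqref{simsuma3}, and set $k=q$,'' and your parity splitting of the $\beta$-sum and base-$q$ to base-$q^2$ conversion is precisely the bookkeeping the authors leave implicit (cf.\ their closing remark about the extra $q$-products inserted to put the left-hand series into $_{r+1}W_r$ form). No gaps.
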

\begin{proof}
Insert the WP-Bailey pair at \eqref{mz03} into \eqref{simsuma3}, and
set $k=q$.
\end{proof}

Remark:  The extra $q$-products  inserted in the numerators and
denominators of the terms in the two series on the left in the
identity above are there so as to give each these series the form of
a $_{r+1}W_r$ series.

%\allowdisplaybreaks{

%}
\end{document}